\newtheorem{thm}{Theorem}[section]
\newtheorem{cor}[thm]{Corollary}
\newtheorem{prop}[thm]{Proposition}
\newtheorem{remark}[thm]{Remark}
\newtheorem{lem}[thm]{Lemma}% \pagestyle{headings}        % Pour mettre des entÃƒÆ’Ã†â€™Ãƒâ€šÃ‚Âªtes avec les titres
\theoremstyle{definition}
\newcommand{\bbZ}{{\mathbb Z}}
\newcommand{\ra}{\rightarrow}
\title{\textbf{Hyperbolic groups with homeomorphic Gromov boundaries.}}           
\author{Alexandre Martin \& Jacek Świątkowski\footnote{The second author is partially supported by the Polish Ministry of Science and Higher Education (MNiSW), Grant N201 541738.}}
\date{}
\begin{document}
\maketitle

\noindent {\bf Abstract.} We show that the Gromov boundary of the free product of two infinite hyperbolic groups is uniquely determined up to homeomorphism by the homeomorphism types of the boundaries of its factors. We generalize this result to graphs of hyperbolic groups over finite subgroups. Finally, we give a necessary and sufficient condition for the Gromov boundaries of any two hyperbolic groups to be homeomorphic (in terms of the topology of the boundaries of factors in terminal splittings over finite subgroups).

\section{Introduction}

It is well known that the Gromov boundary of the free product $G_1*G_2$ of two infinite hyperbolic groups is some combination of the Gromov boundaries of its factors. However, the nature of this combination seems to be not clarified. In particular, it is not clear wheather the topology of $\partial(G_1*G_2)$ is uniquely determined by the topology of $\partial G_1$ and $\partial G_2$.
In this paper we show, among other things, that the answer to the above question is positive.

In \cite{PapasogluWhyteQI}, P. Papasoglu and K. Whyte show that, up to quasi-isometry, such a free product is uniquely determined by the quasi-isometry types of its factors. Since there are hyperbolic groups whose boundaries are homeomorphic, but the groups themselves are not quasi-isometric, this result does not help to answer the question above.

Several descriptions of the boundary $\partial(G_1*G_2)$ appear more or less explicitely in the literature (see e.g. \cite{TirelProducts}, \cite{WoessMartinBoundary}). However, they depend a priori on the groups $G_i$, not only on the topology of their Gromov boundaries, and hence the question requires a more careful analysis.

\bigskip
We now present the main results of the paper, which generalize the above mentioned result concerning the free product. Recall that the fundamental group $\pi_1{\cal G}$ of a graph of groups $\cal G$ with all vertex groups hyperbolic and all edge groups finite is itself hyperbolic.

\begin{thm}
For $i=1,2$, let ${\cal G}_i$ be graphs of groups with all vertex groups hyperbolic and all edge groups finite, and suppose that the groups $\pi_1{\cal G}_i$ have infinitely many ends. Denote by $h({\cal G}_i)$ the set of homeomorphism types of Gromov boundaries of those vertex groups in ${\cal G}_i$ which are nonelementary hyperbolic (i.e. are not finite and not virtually cyclic). If $h({\cal G}_1)=h({\cal G}_2)$ then the Gromov boundaries $\partial(\pi_1({\cal G}_i))$ are homeomorphic.
\label{Thm1}
\end{thm}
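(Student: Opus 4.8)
The plan is to show that, up to homeomorphism, $\partial(\pi_1\mathcal{G})$ is a canonical self-similar ``gluing'' of the boundaries of the nonelementary vertex groups, one that depends only on the \emph{set} of homeomorphism types glued; the theorem is then immediate. The first and main step is purely topological. For a finite family $\mathcal{X}=\{X_1,\dots,X_k\}$ of nonempty compact metrizable spaces I would construct a compactum $\widetilde{\bigsqcup}\mathcal{X}$, its \emph{dense amalgam}, containing for each $j$ a dense collection of pairwise disjoint embedded copies of $X_j$, all these copies together forming a null family whose union has Cantor-set complement, the whole pattern being self-similar. The heart of the matter is a \emph{topological characterization} of $\widetilde{\bigsqcup}\mathcal{X}$: a compact metrizable space $Z$ is homeomorphic to it if and only if $Z$ carries a null family of pairwise disjoint closed subsets, each homeomorphic to some $X_j$, such that for each occurring homeomorphism type the union of the members of that type is dense in $Z$, the union of the whole family has Cantor-set complement, and the copies are embedded in $Z$ in a suitably regular way; uniqueness is proved by a countable back-and-forth construction of the homeomorphism. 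I would then record two formal consequences, which are all the rest of the argument needs: \emph{(i)} $\widetilde{\bigsqcup}\mathcal{X}$ depends only on the set $\{[X_1],\dots,[X_k]\}$ of homeomorphism types occurring in $\mathcal{X}$, so that $\widetilde{\bigsqcup}$ applies to a set of homeomorphism types (with $\widetilde{\bigsqcup}\emptyset$ a Cantor set); and \emph{(ii)} $\widetilde{\bigsqcup}\bigl(\mathcal{A}\cup\{\widetilde{\bigsqcup}\mathcal{B}\}\bigr)\cong\widetilde{\bigsqcup}(\mathcal{A}\cup\mathcal{B})$ for all finite families $\mathcal{A},\mathcal{B}$, and adjoining finite discrete spaces or a Cantor set to $\mathcal{X}$ changes nothing.

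Second, I would prove that whenever $\mathcal{G}$ is a graph of hyperbolic groups with finite edge groups and $G:=\pi_1\mathcal{G}$ is infinitely-ended, one has $\partial G\cong\widetilde{\bigsqcup}\,h(\mathcal{G})$. If $\mathcal{G}$ is a single vertex then $G$ is itself infinitely-ended and so splits nontrivially over a finite subgroup by Stallings' theorem; applying the nontrivial case to such a splitting yields $\partial G\cong\widetilde{\bigsqcup}\mathcal{Y}$ for the family $\mathcal{Y}$ of boundaries of its nonelementary vertex groups, whence $\widetilde{\bigsqcup}\,h(\mathcal{G})\cong\widetilde{\bigsqcup}\{\partial G\}\cong\widetilde{\bigsqcup}\{\widetilde{\bigsqcup}\mathcal{Y}\}\cong\widetilde{\bigsqcup}\mathcal{Y}\cong\partial G$ using \emph{(i)} and \emph{(ii)}. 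So assume the splitting $\mathcal{G}$ is nontrivial and let $G$ act on the Bass--Serre tree $T$; then $T$ has more than two ends and $\partial_\infty T$ is a Cantor set. Each vertex group is quasiconvex in $G$, so each conjugate $gG_vg^{-1}$ has a limit set $\Lambda(gG_vg^{-1})\subseteq\partial G$, which is empty if $G_v$ is finite, a two-point set if $G_v$ is two-ended, and a copy of $\partial G_v$ if $G_v$ is nonelementary. Using finiteness of the edge groups I would check that $\partial G$ is the disjoint union of $\partial_\infty T$ with all the sets $\Lambda(gG_vg^{-1})$; that the family of nonempty limit sets is null and pairwise disjoint; that for each occurring type the union of the corresponding limit sets is dense in $\partial G$; that these copies are regularly embedded in the required sense; and that the complement of their union is exactly $\partial_\infty T$, a Cantor set. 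By the characterization of Step 1, $\partial G$ is then the dense amalgam of the family consisting of the boundaries of the nonelementary vertex groups and a two-point space for each two-ended vertex group, and by \emph{(ii)} the two-point spaces are absorbed, giving $\partial G\cong\widetilde{\bigsqcup}\,h(\mathcal{G})$.

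The theorem now follows at once: if $h(\mathcal{G}_1)=h(\mathcal{G}_2)$ then by \emph{(i)} and the previous step $\partial(\pi_1\mathcal{G}_1)\cong\widetilde{\bigsqcup}\,h(\mathcal{G}_1)\cong\widetilde{\bigsqcup}\,h(\mathcal{G}_2)\cong\partial(\pi_1\mathcal{G}_2)$. I expect the main obstacle to be Step 1 --- constructing the dense amalgam and, above all, proving the uniqueness half of its topological characterization, which calls for a careful countable back-and-forth argument matching up the null families of sub-copies level by level while keeping simultaneous control of the Cantor-set complement. Within Step 2 the delicate point is the null-family-and-disjointness statement for the translated vertex limit sets: this is exactly where finiteness of the edge groups enters --- for infinite, even quasiconvex, edge groups the translated limit sets would neither shrink to points at infinity in $T$ nor remain disjoint --- whereas the hypothesis that $G$ is infinitely-ended is what makes $\partial_\infty T$ a genuine Cantor set rather than empty or a two-point set.
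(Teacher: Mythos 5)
Your proposal is correct in outline, but it follows a genuinely different route from the paper. The paper never characterizes $\partial(\pi_1\mathcal{G})$ as a canonical space: it first treats the free product of two groups, building an explicit model $\delta\Gamma$ of the boundary out of a tree of compactified Cayley graphs (Sections 2--3), and its key step (Lemma \ref{extension} and Corollary \ref{technicallemma}) shows that any homeomorphism $\partial A_1\to\partial A_2$ extends, after choosing a suitable bijection $A_1\to A_2$, to a homeomorphism of the compactifications $\overline{\Gamma A_1}\to\overline{\Gamma A_2}$; this lets the authors transport neighbourhood bases and write down an explicit homeomorphism $\delta\Gamma_1\to\delta\Gamma_2$ via normal forms (Theorem \ref{homeofreeprod}). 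The general graph-of-groups case is then reduced to free products by the Papasoglu--Whyte quasi-isometry theorem (Theorem \ref{PapasogluWhyte}), using that quasi-isometric hyperbolic groups have homeomorphic boundaries. You instead propose to identify $\partial(\pi_1\mathcal{G})$ outright with a canonical ``dense amalgam'' $\widetilde{\bigsqcup}\,h(\mathcal{G})$, characterized topologically and proved unique by a back-and-forth argument, and to verify the characterization on the family of translated vertex-group limit sets; this strategy does work (it is essentially the dense-amalgam approach developed later by the second author), and it buys more than the paper's argument: a canonical description of the boundary, independence from the Papasoglu--Whyte theorem, and the converse direction (Theorem \ref{Thm2}) essentially for free from the same characterization. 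The price is that the bulk of the work --- constructing the amalgam, formulating the precise regularity conditions in the characterization (your ``suitably regular way'' is where the real content lies), the back-and-forth uniqueness proof, and the verification that the translated limit sets form a null, pairwise disjoint family, dense type by type, with Cantor-set complement --- is deferred and is substantially heavier than the paper's Lemma \ref{extension}. Two points to watch if you carry it out: ``nontrivial splitting'' must mean no global fixed vertex after collapsing superfluous edges (a graph of groups can have edges while $\pi_1\mathcal{G}$ coincides with a vertex group, in which case $\partial_\infty T$ is empty rather than a Cantor set), and the absorption of the two-point limit sets of two-ended vertex groups into the Cantor background must genuinely follow from your characterization or absorption lemmas, since those copies sit inside $\partial G$ alongside the nonelementary ones.
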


Recall that a hyperbolic group is 1-ended if its Gromov boundary is a nonempty connected space. Theorem 1.1 has the following partial converse. 

\begin{thm}
Under assumptions and notation as in Theorem \ref{Thm1}, suppose additionally that all the vertex groups of both ${\cal G}_i$ are either finite or 1-ended. If the Gromov boundaries $\partial(\pi_1({\cal G}_i))$ are homeomorphic then the sets $h({\cal G}_i)$ of homeomorphism types of 1-ended vertex groups in ${\cal G}_i$ are equal. 
\label{Thm2}
\end{thm}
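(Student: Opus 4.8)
\emph{Proof idea.} The plan is to recover the set $h(\mathcal{G}_i)$ purely from the topology of $X_i:=\partial(\pi_1\mathcal{G}_i)$ by looking at its partition into connected components. A homeomorphism $X_1\to X_2$ carries connected components onto connected components and restricts to homeomorphisms between them, so the set
\[
  c(X_i):=\{\,[C]\ :\ C\text{ is a connected component of }X_i\text{ with }|C|>1\,\}
\]
of homeomorphism types of non-singleton components is a topological invariant of $X_i$. Hence the theorem follows at once from $X_1\cong X_2$ provided we establish the identity $c(X_i)=h(\mathcal{G}_i)$ for $i=1,2$, and that is where essentially all the work goes. (This is also where the extra hypothesis is used: a two-ended or infinitely-ended vertex group has a totally disconnected boundary, which gets absorbed into the ``Cantor part'' of $X_i$ and leaves no trace in the component partition, so without the hypothesis $c(X_i)$ would fail to see such vertex groups.)

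To compute $c(X_i)$, fix $i$, put $G=\pi_1\mathcal{G}_i$ and $\mathcal{G}=\mathcal{G}_i$ (which we may assume reduced), and let $T$ be the Bass--Serre tree of $\mathcal{G}$. I would invoke the standard description of $\partial G$ as a tree of boundaries over $T$, as used in the proof of Theorem~\ref{Thm1} (cf. \cite{TirelProducts, WoessMartinBoundary}): as a set there is a $G$-equivariant decomposition $\partial G=\partial T\sqcup\bigsqcup_{v\in V(T)}\partial G_v$, where $G_v=\mathrm{Stab}_G(v)$ and $\partial G_v=\emptyset$ when $G_v$ is finite, and where each $\partial G_v$ is a subspace of $\partial G$ carrying its intrinsic topology as the Gromov boundary of the hyperbolic group $G_v$. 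The one structural feature I really need is that, because the edge groups are finite, each edge $e$ of $T$ induces a clopen bipartition of $\partial G$ indexed by the two components of $T\setminus\{e\}$ (on each side one collects the boundary points ``running off to ends of $T$'' on that side together with all $\partial G_w$ for $w$ on that side).

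Granting this, the component analysis is routine. (i) If $\xi$ is a point of $\partial G$ lying over an end of $T$, then any other point $\eta\in\partial G$ is separated from $\xi$ by a single edge of $T$ (an edge on the ray towards that end, leaving the vertex carrying $\eta$, resp. on the geodesic between the two ends), so $\{\xi\}$ is a connected component. (ii) For $v\neq w$ in $V(T)$, an edge on the segment $[v,w]$ puts $\partial G_v$ and $\partial G_w$ in different blocks of a clopen bipartition, so no connected subset of $\partial G$ meets both; combined with (i), any connected set meeting $\partial G_v$ is contained in $\partial G_v$. (iii) When $G_v$ is $1$-ended, $\partial G_v$ is a nonempty connected (indeed non-singleton) compact subset of $\partial G$, hence by (ii) it is a \emph{maximal} connected subset, i.e. a connected component. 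Therefore the non-singleton components of $\partial G$ are exactly the sets $\partial G_v$ with $G_v$ $1$-ended. Finally, every $v\in V(T)$ is a $G$-translate of a vertex $\bar v$ of the underlying graph of $\mathcal{G}$, so $G_v$ is conjugate to $G_{\bar v}$ and, since $G$ acts on $\partial G$ by homeomorphisms, $\partial G_v\cong\partial G_{\bar v}$; moreover, under the standing hypothesis a vertex group is $1$-ended if and only if it is nonelementary. Consequently $c(\partial G)=\{\,[\partial G_{\bar v}]: \bar v\text{ a nonelementary vertex group of }\mathcal{G}\,\}=h(\mathcal{G})$, as required.

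\textbf{Main obstacle.} Everything other than the structural input is formal: the reduction to connected components, and the clopen/maximality bookkeeping of (i)--(iii). The substantive ingredient is the tree-of-boundaries description of $\partial G$ together with the ``finite edge groups $\Rightarrow$ clopen separations along edges'' property; I expect this to be available already from the machinery set up for Theorem~\ref{Thm1}, but if it is not stated there in exactly this form, establishing it carefully is the real task — in particular checking that the reconstructed topology on $\partial G$ is compact, that the edge separations are genuinely clopen, and that the subspace topology induced on each $\partial G_v$ coincides with its intrinsic boundary topology (the last point being exactly what lets us pass to homeomorphism types). Everything downstream of that is elementary.
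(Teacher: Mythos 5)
Your component-counting argument is exactly the engine of the paper's own proof: the paper shows that points lying over ends of the tree are singleton components (Proposition \ref{boundarypoints}), that $\partial G_v$ is a connected component whenever the stabilizer is 1-ended (Proposition \ref{connectedcomponent2}), both via clopen ``halfspace'' separations coming from edges with finite (in fact trivial) stabilizer (Lemma \ref{openclosed}), and then concludes because a homeomorphism of boundaries maps components homeomorphically onto components. The one place where your proposal differs is also where it has a genuine (though honestly flagged) gap: the structural input you invoke --- a tree-of-boundaries description of $\partial(\pi_1\mathcal{G})$ for an arbitrary graph of hyperbolic groups with finite edge groups, with clopen edge separations and with the subspace topology on each $\partial G_v$ agreeing with its intrinsic boundary topology --- is \emph{not} available from the machinery set up for Theorem \ref{Thm1}. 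Sections 2--3, and their extension at the start of Section 6, construct the model $\delta\Gamma$ and prove Proposition \ref{boundarytopology} only for free products (trivial edge groups); the general finite-edge-group case of Theorem \ref{Thm1} is handled not by generalizing the model but by a quasi-isometry reduction, and Remark 3.2 explains why even the free-product case cannot simply be cited from the literature in one line. So, as written, your step ``invoke the standard description'' appeals to a statement the paper never proves in the generality you need.

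The fix is precisely the paper's first move, and it is cheap: apply Theorem \ref{PapasogluWhyte} to replace each $\pi_1\mathcal{G}_i$ by a free product of its 1-ended vertex groups (using $G_1*G_1$, resp.\ $F_2$, in the degenerate cases). This changes neither the homeomorphism type of the boundary (quasi-isometric hyperbolic groups have homeomorphic boundaries) nor the set $h(\mathcal{G}_i)$, and for a free product the description you need is exactly the extended model of Section 6: Lemma \ref{technicallemma2} gives continuity of $\partial A_j\to\delta\Gamma$ (and injectivity plus compactness of $\partial A_j$ and Hausdorffness of $\delta\Gamma$ make it a homeomorphism onto its image, which settles your subspace-topology concern), Lemma \ref{openclosed} gives the clopen separations, and Proposition \ref{boundarytopology} identifies $\delta\Gamma$ with $\partial G$. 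After this substitution your steps (i)--(iii) and the final bookkeeping about conjugate stabilizers go through verbatim and coincide with the paper's Section 6.
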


Recall that, by a result of M. Dunwoody \cite{DunwoodyAccessibility}, each finitely presented group $G$ has {\it a terminal splitting over finite subgroups}, i.e. it is isomorphic to the fundamental group $\pi_1({\cal G})$ of a graph of groups ${\cal G}$ whose vertex groups are 1-ended or finite and whose edge groups are finite. If $G$ is hyperbolic, all vertex groups in any such splitting are also hyperbolic.
The next corollary is a direct consequence of Theorems \ref{Thm1} and \ref{Thm2}.

\begin{cor}
For $i=1,2$, let ${\cal G}_i$ be terminal splittings over finite subgroups of hyperbolic groups $G_i$ with infinitely many ends. Then the Gromov boundaries $\partial G_i$ are homeomorphic iff we have $h({\cal G}_1)=h({\cal G}_2)$ (i.e. the sets of homeomorphism types of boundaries of 1-ended vertex groups in ${\cal G}_i$ coincide).
\end{cor}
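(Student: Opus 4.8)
The plan is to deduce the Corollary directly from Theorems~\ref{Thm1} and~\ref{Thm2}, using Dunwoody's accessibility result to supply the required terminal splittings. First observe that since $G_1$ and $G_2$ are hyperbolic, they are finitely presented, so by \cite{DunwoodyAccessibility} each $G_i$ admits a terminal splitting ${\cal G}_i$ over finite subgroups: a graph of groups with finite edge groups whose vertex groups are each either finite or $1$-ended, and — since $G_i$ is hyperbolic and the edge groups are finite — all vertex groups are themselves hyperbolic. Because $G_i$ has infinitely many ends, $\partial G_i = \partial(\pi_1({\cal G}_i))$ is disconnected, so in particular $G_i$ is neither $1$-ended nor finite nor two-ended, and the hypotheses of Theorems~\ref{Thm1} and~\ref{Thm2} (infinitely many ends, vertex groups finite or $1$-ended) are satisfied by the pair $({\cal G}_1,{\cal G}_2)$. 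Note also that the $1$-ended vertex groups are exactly the nonelementary vertex groups here (a $1$-ended hyperbolic group has connected nonempty boundary, hence is not virtually cyclic and not finite), so the set $h({\cal G}_i)$ as defined in Theorem~\ref{Thm1} coincides with the set of homeomorphism types of boundaries of $1$-ended vertex groups, matching the statement of the Corollary.

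For the ``if'' direction, suppose $h({\cal G}_1)=h({\cal G}_2)$. Then Theorem~\ref{Thm1} applies verbatim and gives that $\partial(\pi_1({\cal G}_1))$ and $\partial(\pi_1({\cal G}_2))$ are homeomorphic; since $\pi_1({\cal G}_i)\cong G_i$, this is precisely the statement that $\partial G_1$ and $\partial G_2$ are homeomorphic. For the ``only if'' direction, suppose $\partial G_1$ and $\partial G_2$ are homeomorphic. Then $\partial(\pi_1({\cal G}_1))\cong\partial(\pi_1({\cal G}_2))$, and since both ${\cal G}_i$ have all vertex groups finite or $1$-ended, Theorem~\ref{Thm2} applies and yields $h({\cal G}_1)=h({\cal G}_2)$. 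Combining the two directions gives the asserted equivalence.

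The only genuine subtlety — and the point I would state most carefully — is that the Corollary is phrased for \emph{given} terminal splittings ${\cal G}_i$ of $G_i$, whereas Dunwoody's theorem only guarantees the \emph{existence} of some such splitting; so one must check that the set $h({\cal G}_i)$ does not depend on the choice of terminal splitting. This, however, is itself a consequence of the two theorems just applied: if ${\cal G}_i$ and ${\cal G}_i'$ are two terminal splittings of the same group $G_i$ (so trivially $\partial(\pi_1{\cal G}_i)\cong\partial(\pi_1{\cal G}_i')$), then Theorem~\ref{Thm2} forces $h({\cal G}_i)=h({\cal G}_i')$. Hence the invariant $h(G_i):=h({\cal G}_i)$ is well defined, and the Corollary reads cleanly as: $\partial G_1\cong\partial G_2$ iff $h(G_1)=h(G_2)$. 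I do not expect any further obstacle; the work is entirely in Theorems~\ref{Thm1} and~\ref{Thm2}, and the Corollary is a formal consequence once accessibility is invoked.
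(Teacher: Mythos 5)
Your proposal is correct and matches the paper's argument: the paper treats this corollary as a direct consequence of Theorems \ref{Thm1} and \ref{Thm2}, with the ``if'' direction given by Theorem \ref{Thm1} and the ``only if'' direction by Theorem \ref{Thm2}, exactly as you argue (your observation that the $1$-ended vertex groups are precisely the nonelementary ones in a terminal splitting is the right reconciliation of the two definitions of $h({\cal G}_i)$). Your closing remark on independence of the choice of terminal splitting is a correct and pleasant extra, but it is not needed for the statement as phrased, since the splittings ${\cal G}_i$ are given.
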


\bigskip
The paper is organized as follows. In Sections 2-4 we deal with the case of the free product $G=G_1*G_2$ of two groups. More precisely, in Section 2 we 
describe some topological space $\delta\Gamma=\delta\Gamma(G_1,G_2)$, arranged out of copies of the Gromov boundaries $\partial G_1$ and $\partial G_2$, and equipped with a natural action of $G$. In Section 3 we show that $\delta\Gamma$ is $G$-equivariantly homeomorphic to the Gromov boundary $\partial G$. We do not claim priority for the content of those two sections, as similar ideas seem to be known to various researchers (see Remark 3.2 and comments at the beginning of Subsection 2.2).
In Section 4 we prove Theorem \ref{Thm1} in the case of free product of two groups (see Theorem \ref{homeofreeprod}). In Section 5 we deduce full Theorem \ref{Thm1} from the special case considered in Section 4, and from the result of Papasoglu and Whyte \cite{PapasogluWhyteQI}. Finally, in Section 6 we prove Theorem \ref{Thm2}.

\section{Geometric model for a free product.}

\subsection{Tree of spaces associated to a free product.}

\noindent \textbf{Bass-Serre tree of the splitting.}  
Let $G=A*B$ be the free product of two hyperbolic groups $A$ and $B$. Let $\tau$ be a copy of the unit interval, and denote by $v_A$ and $v_B$ its vertices. We define a tree $T=T(A,B)$, called the \textit{Bass-Serre tree} of the splitting, as $G \times \tau$ divided by the equivalence relation $\sim$ induced by the equivalences 
$$(g_1,v_A) \sim (g_2, v_A)\,\, \mbox{ if }\,\, g_2^{-1}g_1 \in A,$$
$$(g_1,v_B) \sim (g_2, v_B)\,\, \mbox{ if }\,\, g_2^{-1}g_1 \in B.$$
This tree comes with an action of $G$. 
More precisely, if $[g,x]$ denotes the equivalence class of $(g,x)\in G\times\tau$,
we set $g\cdot[g',x]=[gg',x]$.
A  strict fundamental domain for this action is any edge
$[g,\tau]=\{ [g,x]:x\in\tau\}$. Edges have trivial pointwise stabilizers, while vertex stabilizers are the conjugates in $G$ of the factors $A$ and $B$. For more information about Bass-Serre trees we refer the reader to \cite{SerreTrees}.\\

\noindent \textbf{Tree of Cayley graphs.} To get a convenient geometric model for $G$ we ``blow up'' the vertices of $T$, by replacing them (in an equivariant way) with copies of the Cayley graphs of the associated vertex stabilizers. Let $\Gamma A, \Gamma B$ be the Cayley graphs of $A$ and $B$, with respect to some chosen finite sets of generators, with distinguished vertices as basepoints. Define an auxiliary graph $X$ as the union of $\Gamma A$, $\Gamma B$ and $\tau$, where the vertex $v_A$ of $\tau$ is identified with the basepoint of $\Gamma A$ and $v_B$ with the basepoint of $\Gamma B$. Define a graph $\Gamma=\Gamma(A,B)$ as $G \times X$ divided by the equivalence relation induced by
$$
(g_1,x_1) \sim (g_2, x_2) \mbox{\quad if \quad} x_1, x_2 \in \Gamma A, \,\,\, g_2^{-1}g_1 \in A \,\,\, \mbox{ and }\,\,\, g_2^{-1}g_1x_1 = x_2,
$$
$$
(g_1,x_1) \sim (g_2, x_2) \mbox{\quad if \quad} x_1, x_2 \in \Gamma B,\,\,\, g_2^{-1}g_1 \in B \,\,\, \mbox{ and }\,\,\, g_2^{-1}g_1x_1 = x_2.
$$
The canonical map $X \ra \tau$ obtained by collapsing each Cayley graph on its basepoint extends to an equivariant continuous map $\Gamma\ra T$, which equips $\Gamma$ with a structure of tree of spaces (compare \cite{ScottWall}). We denote by $\Gamma G_v$ the preimage of a vertex $v$ of $T$ under this map. This notation agrees with the fact that $\Gamma G_v$ is a subgraph of $\Gamma$ isomorphic to the Cayley graph of the stabilizing subgroup $G_v$ of $G$ in its action on $T$
(which is an appropriate conjugate of either $A$ or $B$ in $G$). Thus, the structure of tree of spaces over $T$ for $\Gamma$ consists of the subgraphs $\Gamma G_v$ corresponding to the vertices of $T$, and of edges connecting them. These connecting edges are in a natural bijective correspondence with the edges of $T$, and we call them {\it lifts} of the corresponding edges of $T$ under the above projection map $\Gamma\to T$.

%\begin{center}
%\input{Bdry1.pdf_tex}
%\end{center}

\subsection{Compactification and boundary of $\Gamma$.}
We now define a compactification $\overline{\Gamma}$ of $\Gamma$, using Gromov boundaries $\partial A$, $\partial B$ of the groups $A$ and $B$ as ingredients, and its boundary $\delta\Gamma=\overline\Gamma\setminus\Gamma$. This boundary consists of two disjoint sets, corresponding to the two ways of ``aproaching infinity'' in the tree of spaces $\Gamma$. Similar descriptions, in slightly different contexts and expressed in different terms (e.g. by an explicit metric or by description of convergent sequences), can be found in \cite{TirelProducts} and \cite{WoessMartinBoundary}. Our description is inspired by a construction given in \cite{DahmaniCombination}, which does not apply directly to our case. \\

\noindent \textbf{Boundaries of stabilizers.}  Let $\delta_{Stab}\Gamma$ be the set $G \times \big( \partial A \cup \partial B \big)$ divided by the equivalence relation induced by
$$
(g_1,\xi_1) \sim (g_2, \xi_2) \mbox{\quad if \quad} \xi_1, \xi_2 \in \partial A, \,\,\, g_2^{-1}g_1 \in A \,\,\, \mbox{ and }\,\,\, g_2^{-1}g_1\xi_1 = \xi_2,
$$
$$
(g_1,\xi_1) \sim (g_2, \xi_2) \mbox{\quad if \quad} \xi_1, \xi_2 \in \partial B, \,\,\, g_2^{-1}g_1 \in B \,\,\, \mbox{ and }\,\,\, g_2^{-1}g_1\xi_1 = \xi_2.
$$
We denote by $[g, \xi]$ the equivalence class of an element $(g, \xi)$. The set $\delta_{Stab}\Gamma$ comes with a natural action of $G$ given by $g'\cdot[g,\xi]=[g'g,\xi]$. It also comes with a natural projection to the set ${\cal V}(T)$ of vertices of the Bass-Serre tree $T$. We denote by $\partial G_v$ the preimage of a vertex $v$ of $T$ under this projection. This notation is consistent with the fact that $\partial G_v$ can be identified with the Gromov boundary of the stabilizing subgroup $G_v$ of $G$ in its action on $T$. \\

\noindent \textbf{Boundary of the Bass-Serre tree.} Denote by $\partial T$ the set of ends of the tree $T$, i.e. the set of infinite geodesic rays in $T$ divided by the equivalence relation obtained by identifying rays when they coincide except at some bounded intial parts. Clearly, $\partial T$ comes with the action of $G$ induced from the action on $T$. \\ 

We define the \textit{boundary} of $\Gamma$, $\delta\Gamma:=\delta_{Stab}\Gamma \,\sqcup\, \partial T$, and the \textit{compactification} of $\Gamma$, $\overline{\Gamma}= \Gamma \,\sqcup\, \delta\Gamma$. This set comes with the action of $G$ (described separately on the parts $\Gamma$ and $\delta\Gamma$), and with the natural map $p: \overline{\Gamma} \ra T \cup \partial T$. The preimage of a vertex $v$ of $T$ is $\Gamma G_v \cup  \partial G_v$, which we identify (at this moment only set theoretically) with the compactification $\overline{\Gamma G_v}$ of the Cayley graph of the corresponding stabilizing subgroup by means of its Gromov boundary.\\

\noindent \textbf{Topology of the compactification $\overline{\Gamma}$.} For a point $x\in \Gamma\subset\overline{\Gamma}$, we set a basis of open neighbourhoods of $x$ in $\Gamma$ to be also a basis of open neighbourhoods of $x$ in $\overline{\Gamma}$. We now define a basis of open neighbourhoods for points of $\delta\Gamma \subset\overline{\Gamma}$. Fix a vertex $v_0$ in the Bass-Serre tree $T$.

\begin{itemize}

\item Let $\xi \in \delta_{Stab}\Gamma$ and let $v$ be the vertex of $T$ such that $\xi \in \partial G_v$. Let $U$ be a neighbourhood of $\xi$ in $\overline{\Gamma G_v}$. Define $\widetilde{V}_U$ to be the set of all elements $z \in \overline{\Gamma}$ with projection $p(z)\ne v$, and such that the geodesic in $T$ from $v$ to $p(z)$ starts with an edge $e$ that lifts through $p$ to an edge of ${\Gamma}$ which is glued to $\Gamma G_v$ at a point of $U$ 
%(see figure below)
. We then set 
$$
V_U(\xi) = U \cup \widetilde{V}_U.
$$

%\begin{center}
%\input{Bdry2.pdf_tex}
%\end{center}

As a basis of neighbourhoods of $\xi$ in  $\overline{\Gamma}$ we take a collection of sets
$V_U(\xi)$ as above, where $U$ runs through some basis of open neighbourhoods of $\xi$ in $\overline{\Gamma G_v}$.

\item Let $\eta \in \partial T$ and let $n \geq 1$ be an integer. Let $T_n(\eta)$ be the subtree of $T$ that consists of those elements $x \in T$ for which the geodesic from $v_0$ to $x$ has the same first $n$ edges as the geodesic ray $[v_0, \eta)$. Put also $u_n(\eta)$ to be the vertex at distance $n$ from $v_0$ on the same geodesic ray  $[v_0, \eta)$. We then set 
$$
V_n(\eta) = p^{-1}\big(  T_n(\eta)\setminus\{ u_n(\eta) \} \big).
$$
As a basis of open neighbourhoods of $\eta$ in  $\overline{\Gamma}$ we take the collection of sets
$V_n(\eta)$ for all integer $n \ge1$.

%\begin{center}
%\input{Bdry3.pdf_tex}
%\end{center}

\end{itemize}

We skip a straightforward verification that the above collections of sets satisfy the axioms for basis of open neighbourhoods.

\section{Identification of $\delta\Gamma$ with the Gromov boundary $\partial G$.}

This section is devoted to the proof of Proposition \ref{boundarytopology} below.
This proposition is a reformulation or a slight modification of already
known results, but it cannot be easily justified by simply referring to the literature, for the reasons explained in Remark 3.2.
Thus, for completeness and for the reader's convenience, we provide here a direct proof.

\begin{prop}The space $\delta\Gamma$, equipped with the topology induced from the above described topology on $\overline{\Gamma}$, is a compact metric space which is  $G$-equivariantly homeomorphic to the Gromov boundary $\partial G$ of $G$. 
\label{boundarytopology}
\end{prop}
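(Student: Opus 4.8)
The plan is to construct an explicit $G$-equivariant bijection $\Phi\colon \overline{\Gamma}\to \Gamma\sqcup\partial G$ extending a quasi-isometry $\Gamma\to G$ (or $\Gamma\to$ Cayley graph of $G$), and then show it is a homeomorphism; compactness of $\delta\Gamma$ will follow once we know it is homeomorphic to $\partial G$, but we will in fact verify compactness and metrizability of $\overline\Gamma$ directly since this is what makes the homeomorphism check tractable. First I would observe that $\Gamma$ is quasi-isometric to $G$: each vertex space $\Gamma G_v$ embeds isometrically (up to the usual Cayley-graph constants) and the connecting edges match the Bass-Serre tree structure, so the natural inclusion of (a set of orbit representatives of) vertices of $\Gamma$ into the Cayley graph of $G$ is a quasi-isometry; in particular $\Gamma$ is hyperbolic. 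A hyperbolic group acts geometrically on $\Gamma$, so $\Gamma$ admits a Gromov compactification $\Gamma\cup\partial_\infty\Gamma$ with $\partial_\infty\Gamma\cong\partial G$ $G$-equivariantly, and the whole task reduces to identifying the topology we defined on $\overline\Gamma$ with the Gromov topology on $\Gamma\cup\partial_\infty\Gamma$.

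To make that identification I would set up the map at the level of points. A point of $\delta_{Stab}\Gamma$ lying in $\partial G_v$ is a boundary point of the hyperbolic subgroup $G_v$; since $G_v$ is quasiconvex in $G$ (vertex groups of graphs of groups with finite edge groups are quasiconvex — or directly: $\Gamma G_v$ is convex in the tree of spaces $\Gamma$), its boundary embeds into $\partial G$, and we send $\xi$ there. A point $\eta\in\partial T$ is represented by a geodesic ray $[v_0,\eta)$ in $T$; lifting this ray through $p$ to a path in $\Gamma$ that runs through the successive vertex spaces via the connecting edges gives a quasigeodesic ray in $\Gamma$ (the key point being that consecutive connecting edges attach at distinct points of each intermediate vertex space, so the path makes definite progress), hence converges to a well-defined point of $\partial_\infty\Gamma$; we send $\eta$ there. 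One checks this $\Phi$ is well defined (independent of choices of representatives and of $v_0$), $G$-equivariant, and bijective — surjectivity using that every geodesic ray in $\Gamma$ either stays within bounded distance of some $\Gamma G_v$ (giving a point of $\partial G_v$) or crosses infinitely many connecting edges and thus determines an end of $T$ (this dichotomy is the structural heart of the argument and uses hyperbolicity of $\Gamma$ together with the tree-of-spaces structure).

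The remaining, and in my view the main, obstacle is proving that $\Phi$ is a homeomorphism, i.e. matching the neighbourhood bases. For $\eta\in\partial T$ this is comparatively clean: the sets $V_n(\eta)=p^{-1}(T_n(\eta)\setminus\{u_n(\eta)\})$ are preimages of ever-smaller subtrees, and because crossing a connecting edge costs a uniformly bounded amount in $\Gamma$ while the $G_v$-diameters are irrelevant ``transversally'', a point of $\overline\Gamma$ lies in $V_n(\eta)$ iff the corresponding point of $\Gamma\cup\partial_\infty\Gamma$ has large Gromov product with $\Phi(\eta)$; one direction is a pigeonhole/progress estimate, the other uses that a Gromov geodesic towards $\Phi(\eta)$ must eventually track the lifted ray. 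For $\xi\in\partial G_v$ the subtle point is the definition of $\widetilde V_U$: a neighbourhood of $\xi$ should contain exactly those points reached by leaving $\Gamma G_v$ through a connecting edge attached near $\xi$; I would show that ``attached near $\xi$ in $\overline{\Gamma G_v}$'' corresponds, via the quasiconvexity of $\Gamma G_v$ and the fact that the nearest-point projection $\Gamma\to\Gamma G_v$ is (coarsely) well-defined and continuous on the boundary, to ``large Gromov product with $\Phi(\xi)$ in $\overline\Gamma$''. This is where the argument is most technical: one must control how the Gromov product behaves under the nearest-point projection to a quasiconvex subgraph and verify that the two topologies have mutually cofinal neighbourhood bases. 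Once both cases are done, $\Phi$ is a continuous bijection from a space we have shown (via the explicit bases) to be second countable onto the compact metrizable space $\Gamma\cup\partial_\infty\Gamma$; checking $\overline\Gamma$ is compact Hausdorff (or directly that $\Phi^{-1}$ is continuous) upgrades it to a homeomorphism, and restricting to the boundary gives the claimed $G$-equivariant homeomorphism $\delta\Gamma\cong\partial G$, with $\delta\Gamma$ compact metrizable.
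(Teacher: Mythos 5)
Your proposal is correct in substance and its core coincides with the paper's: reduce to the Gromov boundary of $\Gamma$ via the quasi-isometry $\Gamma\to G$ (the paper gets this by collapsing the lifted edges onto a Cayley graph of $G$), use the tree-of-spaces structure to show every geodesic ray either crosses infinitely many connecting edges (an end of $T$) or has a tail in a single convex vertex space $\Gamma G_v$ (a point of $\partial G_v$), and then compare the resulting bijection with the declared neighbourhood bases, exploiting that geodesics are forced through attaching points. Where you genuinely diverge is the endgame. The paper orients the map as $h:\partial\Gamma\to\delta\Gamma$ and exploits that $\partial\Gamma=\partial G$ is already known to be compact: it then suffices to prove that $\delta\Gamma$ is Hausdorff (an easy case check with the $V_U$ and $V_n$ sets) and that $h$ is sequentially continuous (via the criterion $z_n\to z$ iff $d(u_0,[z,z_n])\to\infty$, with one lemma bounding this distance by the distance to a fixed attaching point, and one lemma using convexity of $\Gamma G_v$ exactly where you invoke nearest-point projection); a continuous bijection from a compact space to a Hausdorff space is automatically a homeomorphism, so no converse estimate is needed. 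You orient the map the other way, so you must either prove compactness and metrizability of $\overline\Gamma$ directly --- which you assert at the outset but never sketch, and which is a nontrivial extra piece of work --- or carry out the full two-sided cofinality of neighbourhood bases. The two-sided matching you outline does go through (for $\eta\in\partial T$ note that the stated ``iff'' must be read as mutual cofinality: points $z$ with $p(z)=u_n(\eta)$ can be excluded from $V_n(\eta)$ while still having Gromov product with $\Phi(\eta)$ up to $d(u_0,q)$, where $q$ is the attaching point of the next edge toward $\eta$; since every geodesic from such $z$ to $\Phi(\eta)$ passes through $q$, the product is bounded and cofinality holds), but it is strictly more work than the paper's argument. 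In short: same identification, but the paper's compact-to-Hausdorff trick buys a one-directional continuity check and a short Hausdorffness lemma, whereas your route requires either a direct compactness proof for $\overline\Gamma$ or both directions of the topological comparison.
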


\begin{remark}
{\rm (1) Proposition \ref{boundarytopology} can be proved by repeating the arguments given by Francois Dahmani in Sections 2 and 3 of \cite{DahmaniCombination}. More precisely, using his arguments one can show that the natural action of $G$ on $\delta\Gamma$ is a uniform convergence action. (This does not follow directly from the results of Dahmani, as he considers graphs of groups with infinite edge groups only.) By a theorem of Bowditch \cite{BowditchTopologicalCharacterization}, it follows that $\delta\Gamma$ is then $G$-equivariantly homeomorphic to the Gromov boundary of $G$. A convincing account of the fact that Dahmani's arguments yield Proposition \ref{boundarytopology} would be rather long, even though no essential adaptations are required.

(2) Proposition \ref{boundarytopology} (or at least its main assertion not dealing with $G$-equivariance) is a reformulation (and a restriction to hyperbolic groups) of a result by Wolfgang Woess (see \cite{WoessMartinBoundary} or Section 26.B in \cite{WoessRandomWalks}). Woess studies an object called the {\it Martin boundary} of a graph (or group), which is known to coincide with the Gromov boundary if the graph (or the group) is hyperbolic, see e.g. Section 27 of \cite{WoessRandomWalks}. He describes the Martin boundary of the free product of two groups, in terms of Martin boundaries of the factors. His description uses a slightly different language than ours, and the topology is introduced in terms of convergent sequences rather than neighbourhoods of points. To justify precisely our statement of Proposition \ref{boundarytopology} by referring to the result of Woess, one needs to provide a translation of his setting to ours, and this cannot be done in one sentence. 

(3) Proposition \ref{boundarytopology} follows also from a much more general result contained in the first author's PhD thesis \cite{MartinPhD} (not yet defended or published), namely from Corollary 5.2.8 in his thesis.} 

\end{remark}

We now pass to the proof of Proposition \ref{boundarytopology}. 

Recall that an edge $e$ of $T$ canonically lifts to an edge $\widetilde{e}$ of $\Gamma$ such that the projection $\Gamma \ra T$ maps $\widetilde e$ onto $e$.  We call $\widetilde e$ the \textit{lift} of $e$. For an edge $e= [v,v']$ of $T$, we call the points $\widetilde{e} \cap \Gamma G_v$ and $\widetilde{e} \cap \Gamma G_{v'}$ the \textit{attaching points} of $\widetilde{e}$.

Observe that
we can obtain a Cayley graph for $G$ by collapsing to points lifts of all edges of $T$ in $\Gamma$. Moreover, the corresponding quotient map is $G$-equivariant.  In particular, $\Gamma$ is $G$-equivariantly quasi-isometric to $G$, so $\Gamma$ is a hyperbolic space and the Gromov boundaries $\partial\Gamma$ and $\partial G$ are $G$-equivariantly homeomorphic.
It is thus sufficient to construct a $G$-equivariant homeomorphism $\partial\Gamma\to\delta\Gamma$.

\medskip
\noindent
{\bf Gromov boundary $\partial\Gamma$.}
We now describe the Gromov boundary $\partial\Gamma$. As a set, it consists of equivalence classes of geodesic rays in $\Gamma$ (started at arbitrary vertices), where two geodesic rays are equivalent if they remain at finite distance from one another. Such geodesic rays $\rho$ 
are easily seen to have one of the following two forms:

\begin{itemize}
\item[(r$_1$)] 
$\rho$ is the concatenation of an infinite sequence of polygonal paths of the form
$$\rho=[u_1,w_1]\tau_1[u_2,w_2]\tau_2\ldots
\leqno{(3.1.1)}
$$
where each $[u_i,w_i]$ is a geodesic path in a single subgraph $\Gamma G_{v_i}$ in $\Gamma$ ($[u_1,w_1]$ may be reduced to a trivial path), and where each $\tau_i$ is the lift of an edge $e_i$ of $T$; in particular, the concatenation $e_1e_2\ldots$ defines a geodesic ray in $T$;

\item[(r$_2$)] 
$\rho$ is the concatenation of a finite sequence of polygonal paths of the form
$$
\rho=[u_1,w_1]\tau_1\ldots[u_{k-1},w_{k-1}]\tau_{k-1}\rho_{k}
\leqno{(3.1.2)}
$$
for  $k\geq0$, where the paths $[u_i,w_i]$ and the edges $\tau_i$ are as in (r$_1$), and where $\rho_k$ is a geodesic ray in $\Gamma G_{v_k}$ based at the attaching point of $\tau_{k-1}$.
\end{itemize}

\noindent
An easy observation is that two geodesic rays $\rho,\rho'$ in $\Gamma G$
are equivalent if and only if:

\begin{itemize}
\item[(e$_1$)] either they both have form (3.1.1) and their corresponding infinite sequence of edges $(\tau_i)_{i\ge1}$ and $(\tau_i')_{i\ge1}$ eventually coincide (i.e. they coincide after deleting some finite intial parts), or

\item[(e$_2$)] they both have form (3.1.2),
and in this case their terminal geodesic rays $\rho_{k},\rho'_{k'}$ belong to the same subgraph $\Gamma G_{v_k}=\Gamma G_{v'_{k'}}$ and are equivalent in it.
\end{itemize}

\medskip
\noindent
{\bf The map $h:\partial\Gamma\to\delta\Gamma$.}
It follows from (e$_1$) and (e$_2$) that $\partial\Gamma$ is in bijective correspondence with the set $\delta_{Stab}\Gamma \cup \partial T$, that is, with $\delta\Gamma$. More precisely, if $\xi\in\partial\Gamma$ is an equivalence class as in $(e_1)$, it corresponds to an end of $T$ induced by the corresponding class of rays $e_1e_2\dots$ in $T$. If $\xi$  is an equivalence class as in $(e_2)$, it corresponds to the point of the Gromov boundary $\partial G_{v_k}$ represented by the equivalence class of geodesic rays $\rho_k$. We denote by $h$ the associated natural bijection $\partial\Gamma \ra \delta\Gamma$. This map is easily seen to be $G$-equivariant. Since $\partial\Gamma=\partial G$ is compact, to get Proposition \ref{boundarytopology} it is enough to prove that $\delta\Gamma$ is Hausdorff and $h$ is continuous.

\begin{lem}
The space $\delta\Gamma$ is Hausdorff.
\label{Hausdorff}
\end{lem}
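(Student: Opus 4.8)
The plan is to show that any two distinct points $\xi,\xi'\in\delta\Gamma$ can be separated by disjoint basic open sets of the kind constructed in Section 2. There are three cases to analyze, according to the types of $\xi$ and $\xi'$: both in $\partial T$, one in $\partial T$ and one in $\delta_{Stab}\Gamma$, or both in $\delta_{Stab}\Gamma$.

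**First I would treat the case $\xi,\xi'\in\partial T$.** Since $\xi\neq\xi'$, the geodesic rays $[v_0,\xi)$ and $[v_0,\xi')$ in $T$ eventually diverge, so there is an integer $n$ such that the first $n$ edges of these two rays differ; then $u_n(\xi)\neq u_n(\xi')$ and the subtrees $T_n(\xi)\setminus\{u_n(\xi)\}$ and $T_n(\xi')\setminus\{u_n(\xi')\}$ are disjoint subsets of $T$, hence their $p$-preimages $V_n(\xi)$ and $V_n(\xi')$ are disjoint. (One should be slightly careful: if one ray is an "initial segment" issue arises only when the rays share a long prefix, but distinctness of ends in a tree guarantees eventual divergence, so a large enough $n$ works.)

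**Next, the mixed case:** $\eta\in\partial T$ and $\xi\in\partial G_v$ for some vertex $v$. Pick $n$ large enough that the vertex $u_n(\eta)$ on $[v_0,\eta)$ is at distance at least $2$ from $v$ along $T$ — possible since $[v_0,\eta)$ is an infinite ray and $v$ is fixed, so eventually the ray leaves any bounded neighbourhood of $v$ (after possibly first passing through $v$). Then $V_n(\eta)=p^{-1}(T_n(\eta)\setminus\{u_n(\eta)\})$ consists of points whose projection lies in a subtree not containing $v$; in particular $v\notin T_n(\eta)$, so we may choose any neighbourhood $U$ of $\xi$ in $\overline{\Gamma G_v}$ and the basic set $V_U(\xi)=U\cup\widetilde V_U$ will be disjoint from $V_n(\eta)$, because every point of $V_U(\xi)$ either lies in $\Gamma G_v$ (projecting to $v\notin T_n(\eta)$) or projects to a vertex $w$ with the geodesic $[v,w]$ starting along a specific edge at $v$ — and such a $w$ lies in the "half-tree" on the far side of $v$ from $u_n(\eta)$, hence outside $T_n(\eta)$. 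I would make this precise by noting that $T_n(\eta)$ is exactly the set of $x$ whose geodesic to $v_0$ agrees with $[v_0,\eta)$ for $n$ steps, and comparing the location of $v$.

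**The main case, and the one I expect to be the real obstacle, is $\xi,\xi'\in\delta_{Stab}\Gamma$.** If $\xi\in\partial G_v$ and $\xi'\in\partial G_{v'}$ with $v\neq v'$: let $e,e'$ be the first and last edges of the geodesic $[v,v']$ in $T$. Choose $U$ a neighbourhood of $\xi$ in $\overline{\Gamma G_v}$ small enough to avoid the attaching point of $\widetilde e$, and similarly $U'$ avoiding the attaching point of $\widetilde{e'}$ in $\Gamma G_{v'}$; then $V_U(\xi)$ and $V_{U'}(\xi')$ are disjoint, since a point $z$ in both would have to satisfy incompatible constraints on where the geodesic from $v$ (resp.\ $v'$) to $p(z)$ starts — concretely, $z\in V_U(\xi)\cap V_{U'}(\xi')$ forces $p(z)\neq v,v'$ with the geodesics $[v,p(z)]$ and $[v',p(z)]$ avoiding the edges $e,e'$ respectively, which is impossible in a tree once $U,U'$ miss those attaching points. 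The harder sub-case is $v=v'$: then $\xi\neq\xi'$ are distinct points of the Gromov boundary $\overline{\Gamma G_v}$, which \emph{is} Hausdorff (it is the boundary compactification of a hyperbolic Cayley graph, hence metrizable), so pick disjoint neighbourhoods $U,U'$ of $\xi,\xi'$ in $\overline{\Gamma G_v}$; I then need to check that $V_U(\xi)$ and $V_{U'}(\xi')$ remain disjoint after adding the "tails" $\widetilde V_U,\widetilde V_{U'}$. The point is that $\widetilde V_U$ consists of $z$ whose geodesic from $v$ starts with an edge glued to $\Gamma G_v$ at a point of $U\cap\Gamma G_v$; since $U\cap\Gamma G_v$ and $U'\cap\Gamma G_v$ are disjoint, the respective sets of "first edges" are disjoint, so the tails are disjoint, and the tails are disjoint from $U,U'$ as well since tail points project away from $v$. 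This last verification — that disjointness downstairs in each $\overline{\Gamma G_v}$ propagates correctly through the gluing data — is the technical heart, and it is exactly where one must use that the attaching points of distinct lifts $\widetilde e$ at a fixed vertex $v$ are distinct vertices of $\Gamma G_v$, so that separating $\xi$ from $\xi'$ inside $\overline{\Gamma G_v}$ automatically separates the bundles of edges hanging off near them.
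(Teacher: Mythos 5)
Your Cases 1 and 3 are correct and essentially the paper's own argument: for two ends of $T$ you separate by the sets $V_n$ once the rays from $v_0$ have diverged, and for two points of $\delta_{Stab}\Gamma$ you either use that $\overline{\Gamma G_v}$ is Hausdorff (when $v=v'$, noting that each edge-lift has a single attaching point, so disjointness of $U,U'$ forces disjointness of the tails) or shrink $U,U'$ so that they miss the attaching points of the lifts of the two extreme edges of $[v,v']$ (when $v\ne v'$).

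The mixed case, however, contains a genuine error. You claim that once $v\notin T_n(\eta)$, \emph{any} neighbourhood $U$ of $\xi$ in $\overline{\Gamma G_v}$ yields $V_U(\xi)\cap V_n(\eta)=\emptyset$, on the grounds that points of $\widetilde{V}_U$ project to vertices $w$ whose geodesic $[v,w]$ starts along ``a specific edge at $v$'' pointing away from $u_n(\eta)$. This misreads the definition of $\widetilde{V}_U$: it consists of \emph{all} $z$ with $p(z)\ne v$ such that the first edge of the geodesic from $v$ to $p(z)$ --- an edge that varies with $z$ --- has its lift attached to $\Gamma G_v$ at a point of $U$. Since $U$ is a neighbourhood of the boundary point $\xi$, it contains infinitely many vertices of $\Gamma G_v$, so $\widetilde{V}_U$ fans out in many directions, and nothing prevents one of them from being the direction of $\eta$. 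Concretely, let $e$ be the first edge of $[v,\eta)$; if the attaching point of $\widetilde{e}$ lies in $U$ (which happens, for instance, for $U=\overline{\Gamma G_v}$), then every point of $\delta\Gamma$ projecting into $T_n(\eta)\setminus\{u_n(\eta)\}$ has its geodesic from $v$ starting with $e$ and hence lies in $\widetilde{V}_U$, so $V_U(\xi)\cap V_n(\eta)\ne\emptyset$. The repair is exactly the move you already make in Case 3: the attaching point of $\widetilde{e}$ is a point of $\Gamma G_v$, distinct from $\xi$, so one may choose $U$ missing it, and only then does the disjointness argument go through (this is the paper's Case 2, which takes $n$ one larger than the distance from $v_0$ to the point where $[v,\eta)$ meets $[v_0,\eta)$). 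A smaller slip in the same case: requiring $u_n(\eta)$ to be at distance at least $2$ from $v$ does not by itself give $v\notin T_n(\eta)$ (take $v$ on the ray $[v_0,\eta)$ far from $v_0$); one should instead take $n$ larger than the distance from $v_0$ to the projection of $v$ onto $[v_0,\eta)$.
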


\begin{proof}
Since the points in $\delta\Gamma$ are of two different natures, there are three cases to consider.\\ 

\textit{Case 1:} Let $\eta, \eta'$ be two distinct points of $\partial T$. Let $n$ be the length of the maximal common subsegment in the geodesic rays $[v_0,\eta)$ and $[v_0,\eta')$. It follows from the definition of the topology of $\delta\Gamma$ that the associated neighbourhoods $V_{n+1}(\eta)$ and $V_{n+1}(\eta')$ are disjoint. \\

\textit{Case 2:} Let $\eta \in \partial T$ and $\xi \in \delta_{Stab}\Gamma$. Let $v$ be the vertex of $T$ such that $\xi \in \partial G_v$. The intersection of the geodesic rays $[v_0, \eta) \cap [v, \eta)$ is a geodesic ray of the form $[w, \eta)$ for some vertex $w$ of $T$. Let $n$ be the combinatorial distance between $v_0$ and $w$ in $T$. Let $e$ be the first edge on the geodesic ray $[v, \eta)$ and $\widetilde{e}$ its lift to $\Gamma$. Let $U$ be a neighbourhood of $\xi$ in $\overline{\Gamma G_v}$ that misses the attaching point of $\widetilde{e}$. The definition of the topology of $\delta\Gamma$ then implies that the neighbourhoods $V_U(\xi)$ and $V_{n+1}(\eta)$ are disjoint. \\

\textit{Case 3:} Let $\xi, \xi'$ be two distinct points of $\delta_{Stab}\Gamma$. Let $v,v'$ be the vertices of $T$ such that $\xi \in \partial G_v$ and $\xi' \in \partial G_{v'}$.

If $v=v'$, then as $\overline{\Gamma G_v}$ is Hausdorff, we can choose disjoint neighbourhoods $U, U'$ of $\xi, \xi'$ in $\overline{EG_v}$, which yields disjoint neighbourhoods $V_U(\xi)$ and $V_{U'}(\xi')$ in $\delta\Gamma$. 

If $v \neq v'$, let $e$ (resp. $e'$) be the edge of the geodesic segment $[v,v']$ which contains $v$ (resp. $v'$). Let $\widetilde{e}, \widetilde{e}'$ be their lifts to $\Gamma$. We choose a neighbourhood  $U$ (resp. $U'$) of $\xi$ (resp. $\xi'$) in $\overline{\Gamma G_v}$ (resp. $\overline{\Gamma G_{v'}}$) which misses the attaching point of $\widetilde{e}$ (resp. $\widetilde{e}'$). The definition of the topology of $\delta\Gamma$ then implies that the neighbourhoods $V_U(\xi)$ and $V_{U'}(\xi')$ are disjoint.
\end{proof}

\noindent
{\bf The map $h$ is continuous.}
%We now turn to proving that $h$ is continuous. 
As $\partial\Gamma$ is metrisable, it is enough to prove that $h$ is sequentially continuous. To do this, we fix a vertex $u_0\in \Gamma G_{v_0}$ (where $v_0$ is our chosen base vertex of $T$), and we view it as a basepoint of $\Gamma$. Points $z\in \partial\Gamma$ are then represented by (the equivalence classes of) geodesic rays started at $u_0$ and convergence of sequences in $\partial\Gamma$ is characterized by
$$
z_n\to z \hbox{\quad iff \quad} d(u_0,[z,z_n])\to\infty,
\leqno{(3.1.3)}
$$  
where $d$ is the standard geodesic metric on $\Gamma$ and $[z,z_n]$ is any geodesic in $\Gamma$ connecting the corresponding pair of boundary points (see e.g. Remark 3.17(6) and Exercise 3.18(3) on p. 433 in \cite{BridsonHaefliger}). 
%Furthermore, we denote by $\delta$ the hyperbolicity constant for $\partial\Gamma$.

We start by a useful lemma, in the statement of which we refer to the natural map $\partial\Gamma \ra {\cal V}(T) \cup \partial T$ resulting from conditions ($e_1$) and ($e_2$), which we again call a \textit{projection}.

\begin{lem}
Let $z_n: n\in \mathbb{N}$ and $z$ be points of $\delta\Gamma$, $x_n: n\in \mathbb{N}$ and  $x$ their projections in ${\cal V}(T) \cup \partial T$. Suppose that the intersection $[v_0, x_n] \cap [v_0, x]$ is a fixed (i.e. independent of $n$) geodesic segment in $T$ that is strictly contained in both $[v_0, x_n] $ and $ [v_0, x]$. Then the sequence of distances $(d(u_0,[z,z_n]))$ is bounded.
\label{LemmaGromovProduct}
\end{lem}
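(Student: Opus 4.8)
The plan is to produce, under the stated hypotheses, a single vertex $w$ of $T$ through which \emph{every} geodesic $[z, z_n]$ in $\Gamma$ must pass (or pass very close to), and which is at bounded distance from $u_0$; the boundedness of $d(u_0,[z,z_n])$ then follows. Let $[v_0,x_n]\cap[v_0,x]=[v_0,w]$ be the fixed common segment, with $w$ its terminal vertex; by hypothesis $w\neq x$ and $w\neq x_n$ for all $n$, so the geodesics (or geodesic rays) $[v_0,x]$ and $[v_0,x_n]$ leave $w$ along distinct edges, say $e$ (toward $x$) and $e_n$ (toward $x_n$), with $e\neq e_n$. First I would observe that $d_T(v_0,w)$ is a fixed constant, hence, since the projection $p\colon\Gamma\to T\cup\partial T$ is (coarsely) distance non-increasing on the relevant scale and $u_0$ projects to $v_0$, the attaching points of the lifts $\widetilde e$ and $\widetilde e_n$ lie in $\Gamma G_w$ at bounded distance from the image of $u_0$; more precisely, $\Gamma G_w$ is a subgraph of $\Gamma$ and its intrinsic Cayley-graph distance is only increased when passing to $\Gamma$ after collapsing all other edges — but here I want the reverse inequality, so I will argue directly with the tree-of-spaces structure.

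The key combinatorial step is the following. Any geodesic ray or segment in $\Gamma$ representing $z$ has one of the two forms (r$_1$), (r$_2$) from the preceding discussion, and its image under $p$ is (coarsely) the geodesic $[v_0,x]$; likewise for $z_n$. Thus a geodesic $[z,z_n]$ in $\Gamma$ projects to a path in $T$ joining (neighborhoods of) $x$ and $x_n$, which must therefore pass through $w$, and in fact must \emph{traverse the edges $e$ and $e_n$} since these are the unique edges at $w$ pointing toward $x$ and $x_n$ respectively. Consequently $[z,z_n]$ must enter the subgraph $\Gamma G_w$ through the attaching point of $\widetilde e$ and leave it through the attaching point of $\widetilde e_n$ (every lift of an edge of $T$ is a single edge, so crossing $e$ or $e_n$ in $T$ forces crossing the unique lift in $\Gamma$). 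Hence $[z,z_n]$ contains a vertex of $\Gamma G_w$. Now I use hyperbolicity of $\Gamma$ together with the fact that $d(u_0,\Gamma G_w)$ is bounded (the path from $u_0$ to $\Gamma G_w$ crosses the lifts of the fixed finitely many edges on $[v_0,w]$, each contributing length one, plus bounded excursions inside the finitely many intervening vertex graphs): more carefully, since $w$ is fixed, there is literally a fixed vertex $u_w\in\Gamma G_w$, namely the attaching point of the lift of the last edge of $[v_0,w]$, with $d(u_0,u_w)$ equal to a constant independent of $n$; and the attaching point of $\widetilde e$ in $\Gamma G_w$ is likewise a fixed vertex (it depends only on $e$, which depends only on $x$, not on $n$). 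So $[z,z_n]$ passes through the fixed vertex $q:=\widetilde e\cap\Gamma G_w$, whence $d(u_0,[z,z_n])\le d(u_0,q)=$ const.

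The point I expect to need the most care is the claim that a geodesic $[z,z_n]$ in $\Gamma$, and not merely \emph{some} path, must cross both edges $\widetilde e$ and $\widetilde e_n$ — i.e. that $q$ actually lies on $[z,z_n]$ rather than merely near it. This is where the tree-of-spaces structure is essential: removing the single edge $\widetilde e$ from $\Gamma$ disconnects it (since removing $e$ disconnects $T$ and $p$ is a fibration with connected fibers over vertices), and $z$ lies on the far side (the side of $x$) while $z_n$, for $x_n\ne x$ beyond $w$ along a different edge, lies on the near side; so any path from $z_n$ to $z$, in particular $[z,z_n]$, contains $\widetilde e$, hence contains its attaching point $q$. (If $z$ happens to have form (r$_2$) with terminal ray in some $\Gamma G_{v_k}$, the same cut argument applies with $\widetilde e$ the edge on $[v_0,x]$ issuing from $w$ toward $x$, since $w\ne x=v_k$.) Everything else is bookkeeping with the finitely many fixed edges of $[v_0,w]$ and the observation that the relevant attaching points are determined by $w$, $x$ alone.
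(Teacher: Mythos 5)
Your proposal is correct and follows essentially the same route as the paper: both arguments identify the fixed attaching point in $\Gamma G_w$ of the lift of the first edge of $[v_0,x]$ beyond the common segment (your $q$, the paper's $w_{m+1}$), note that the tree-of-spaces structure forces every geodesic $[z,z_n]$ to cross that lifted edge and hence pass through this point, and conclude $d(u_0,[z,z_n])\le d(u_0,q)$, a constant. Your separation argument just spells out the paper's one-line appeal to the tree-of-spaces structure, so there is no substantive difference.
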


\begin{proof}  The intersection of geodesics $[v_0, x_n]$ and $[v_0,x]$ can be written as the concatenation $e_1\ldots e_m$ of a finite sequence of edges of $T$. Then, in accordance with (3.1.1) or (3.1.2), a geodesic ray in the equivalence class $z$ started at $u_0$ has a form  
$$
[u_0,w_1]\widetilde{e_1}[u_2,w_2] \ldots \widetilde{e_m}[u_{m+1},w_{m+1}]\tau_{m+1}\ldots
$$ 
where $\widetilde{e_i}$ is the lift of $e_i$, and where the appearance of $\tau_{m+1}$ follows from the assumption that $[v_0, x_n] \cap [v_0, x]$ is strictly contained in both intersected geodesics. Since $\Gamma$ has a structure of tree of spaces, for any $n$  every geodesic $[z,z_n]$ passes through the vertex $w_{m+1}$. Thus we have $d(u_0,[z,z_n])\le d(u_0,w_{m+1})$, hence the lemma.
\end{proof}

\begin{cor} Let $\eta\in\partial\Gamma$ be a point such that $h(\eta) \in \partial T$ (i.e. geodesic rays representing $\eta$ have form (2.1.1)), and let $(z_n)$ be a sequence converging to $\eta$ in $\partial\Gamma$. Then $h(z_n)$ converges to $h(\eta)$ in $\delta\Gamma$. \qed
\label{continuity1}
\end{cor}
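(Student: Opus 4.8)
The plan is to prove that $h$ is sequentially continuous at $\eta$ by contradiction, reducing the matter to the distance estimate of Lemma~\ref{LemmaGromovProduct}. Write $\eta_T:=h(\eta)\in\partial T$, and recall that $\{V_N(\eta_T)\}_{N\ge1}$ is a basis of neighbourhoods of $\eta_T$ in $\delta\Gamma$. Suppose $h(z_n)$ does not converge to $\eta_T$. Then I would fix an $N\ge1$ and, after passing to a subsequence (still denoted $(z_n)$), assume $h(z_n)\notin V_N(\eta_T)$ for every $n$. Let $x_n\in{\cal V}(T)\cup\partial T$ be the projection of $z_n$ and let $y_n$ be the vertex at which the geodesic (or ray) $[v_0,x_n]$ leaves the ray $[v_0,\eta_T)$, i.e. $[v_0,x_n]\cap[v_0,\eta_T)=[v_0,y_n]$. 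A short inspection of the definition of $V_N(\eta_T)=p^{-1}\big(T_N(\eta_T)\smallsetminus\{u_N(\eta_T)\}\big)$ gives $x_n\neq\eta_T$ (otherwise $h(z_n)=\eta_T$) and $d(v_0,y_n)\le N$ for all $n$: if $[v_0,x_n]$ agreed with $[v_0,\eta_T)$ on $N$ or more edges, then $x_n$ would lie in $T_N(\eta_T)$, and the only point of $T_N(\eta_T)$ excluded from $V_N(\eta_T)$ is the vertex $u_N(\eta_T)$, which still forces $d(v_0,y_n)=N$.

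Next I would bound $d(u_0,[\eta,z_n])$ uniformly in $n$. Since $h(\eta)\in\partial T$, a geodesic ray representing $\eta$ and issuing from $u_0$ has form (r$_1$), say $[u_0,w_1]\widetilde{e_1}[a_2,w_2]\widetilde{e_2}\cdots$, whose projection to $T$ is the ray $[v_0,\eta_T)$ with consecutive edges $e_1,e_2,\dots$; here $w_j$ is the attaching point in $\Gamma G_{u_{j-1}(\eta_T)}$ of the lift $\widetilde{e_j}$. Put $m_n:=d(v_0,y_n)\le N$. In $T$, the geodesic from $x_n$ to $\eta_T$ passes through $y_n=u_{m_n}(\eta_T)$ and then runs along $[u_{m_n}(\eta_T),\eta_T)$, whose first edge is $e_{m_n+1}$, so it crosses $e_{m_n+1}$. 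As $\widetilde{e_{m_n+1}}$ is the unique lift of $e_{m_n+1}$ and is a separating edge of $\Gamma$ (the edge groups being trivial), and as $z_n$ and $\eta$ project into the two distinct components of $T$ obtained by deleting the interior of $e_{m_n+1}$, every geodesic of $\Gamma$ joining $\eta$ to $z_n$ must cross $\widetilde{e_{m_n+1}}$, in particular it passes through the endpoint $w_{m_n+1}$ of $\widetilde{e_{m_n+1}}$. This is exactly the mechanism used in the proof of Lemma~\ref{LemmaGromovProduct}. Therefore $d(u_0,[\eta,z_n])\le d(u_0,w_{m_n+1})\le d(u_0,w_{N+1})=:C$ for all $n$.

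Finally, since $z_n\to\eta$ in $\partial\Gamma$, formula (3.1.3) yields $d(u_0,[\eta,z_n])\to\infty$, contradicting the bound $C$; hence $h(z_n)\to h(\eta)$, which proves the corollary.

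\textbf{Main obstacle.} The only point needing a little care is the degenerate configuration $x_n=y_n$, in which $z_n$ projects to a \emph{vertex} $u_{m_n}(\eta_T)$ lying on the ray $[v_0,\eta_T)$, so that the strict-containment hypothesis of Lemma~\ref{LemmaGromovProduct} fails and that lemma cannot be quoted verbatim. What I would have to observe is that the cut-edge argument above still applies unchanged: because $\eta_T$ is an \emph{end}, the edge $e_{m_n+1}$ (and hence the vertex $w_{m_n+1}$, lying on a ray representing $\eta$) still exists and still separates $z_n$ from $\eta$, so the same bound $d(u_0,[\eta,z_n])\le C$ holds. Everything else is routine bookkeeping with the tree-of-spaces structure of $\Gamma$ and the definition of the topology on $\delta\Gamma$.
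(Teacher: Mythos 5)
Your proof is correct and follows essentially the same route as the paper: the paper simply combines (3.1.3) with Lemma~\ref{LemmaGromovProduct} (in contrapositive) to get that the projections $x_n$ converge to $h(\eta)$ in $T\cup\partial T$, and then invokes the definition of the $V_n$-neighbourhoods, which is exactly the separating-edge mechanism you re-derive inside your contradiction argument. Your explicit treatment of the degenerate case $x_n=u_{m_n}(\eta_T)$, where the strict-containment hypothesis of Lemma~\ref{LemmaGromovProduct} fails, is a fair point that the paper's terse citation of the lemma glosses over, and your fix (the cut vertex $w_{m_n+1}$ still separates) is the right one.
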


\begin{proof}
Let $x_n \in {\cal V}(T) \cup \partial T$ be the projections of $z_n$. Since by (3.1.3) we have $d(u_0,[\eta,z_n]) \ra \infty$, Lemma \ref{LemmaGromovProduct} implies that $x_n$ converges to $\eta$ in $T \cup \partial T$. By definition of the topology of $\overline{\Gamma}$, this implies that $h(z_n)$ converges to $h(\eta)$ in $\delta\Gamma$.
\end{proof}

\begin{lem}
Let $\xi\in\partial\Gamma$ be a point such that $h(\xi)\in \delta_{Stab}\Gamma$ (i.e. geodesic rays representing $\xi$ have form (3.1.2)), and let $(z_n)$ be a sequence of elements of $\partial\Gamma$ converging to $\xi$. Then $h(z_n)$ converges to  $h(\xi)$ in $\delta\Gamma$.
\label{continuity2}
\end{lem}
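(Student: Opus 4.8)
The plan is to reduce continuity of $h$ at $\xi$ to a statement about how a basic neighbourhood $V_U(h(\xi))$ of $h(\xi)\in\partial G_v$ (where $v$ is the projection of $\xi$) pulls back under $h$, and then to show that this pullback contains all but finitely many $z_n$. Write the terminal geodesic ray $\rho_k$ representing $\xi$ inside $\Gamma G_{v}$, where $v=v_k$; fix a basic neighbourhood $U$ of $h(\xi)$ in $\overline{\Gamma G_v}$, coming from the Gromov-boundary topology of the hyperbolic space $\Gamma G_v$. I would first record the structural fact, following Lemma \ref{LemmaGromovProduct}, that since $d(u_0,[\xi,z_n])\to\infty$, the projections $x_n$ of $z_n$ cannot have their geodesic $[v_0,x_n]$ branch away from $[v_0,v]=[v_0,x]$ at a vertex strictly before $v$ for infinitely many $n$ (otherwise that branch vertex $w_{m+1}$ would be fixed along a subsequence and bound the distances, contradicting convergence). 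So, passing to the relevant tail, every $z_n$ projects either to $v$ itself or to a vertex/end $x_n$ such that the geodesic $[v,x_n]$ in $T$ starts with a single well-defined edge $e_n$ incident to $v$.

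Next I would split the tail into two kinds of indices. For indices with $x_n=v$: then $z_n\in\partial\Gamma$ is represented by a geodesic ray of type (r$_2$) ending in a ray $\rho^{(n)}$ inside the \emph{same} subgraph $\Gamma G_v$, so $h(z_n)\in\partial G_v=\partial(\Gamma G_v)$, and since $\Gamma G_v$ is quasiconvex in $\Gamma$ (it is a single vertex space of the tree of spaces, hence isometrically embedded and in fact convex, as geodesics of $\Gamma$ between its points stay in it), the condition $d(u_0,[\xi,z_n])\to\infty$ forces $d(u_0',[\rho_k,\rho^{(n)}])\to\infty$ for the basepoint $u_0'$ of $\Gamma G_v$ on the corresponding subsequence, i.e.\ $h(z_n)\to h(\xi)$ \emph{within} $\overline{\Gamma G_v}$, so eventually $h(z_n)\in U\subset V_U(h(\xi))$. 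For indices with $x_n\neq v$: here $h(z_n)$ lies over a vertex or end on the far side of the edge $e_n$, and by the definition of $V_U(h(\xi))=U\cup\widetilde V_U$ I must check that the lift $\widetilde{e_n}$ is glued to $\Gamma G_v$ at a point lying in $U$. This is where the key estimate enters: the attaching point $a_n$ of $\widetilde{e_n}$ in $\Gamma G_v$ is the point through which every geodesic from $u_0$ to $z_n$ (inside $\Gamma G_v$, by the tree-of-spaces structure) must pass before leaving $\Gamma G_v$, so $[\xi,z_n]$ enters $\Gamma G_v$, travels along a geodesic of $\Gamma G_v$ that comes from near $a_n$, and the hypothesis $d(u_0,[\xi,z_n])\to\infty$ forces $a_n$ to converge to $h(\xi)$ in $\overline{\Gamma G_v}$; hence $a_n\in U$ eventually, so $z_n\in\widetilde V_U\subset V_U(h(\xi))$.

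So the structure of the argument is: (1) use Lemma \ref{LemmaGromovProduct} to throw away the finitely many "early-branching" indices; (2) for the remaining indices, produce from $d(u_0,[\xi,z_n])\to\infty$ a point of $\Gamma G_v$ — either the endpoint data of $\rho^{(n)}$, or the attaching point $a_n$ of the edge on which $z_n$'s geodesic leaves $\Gamma G_v$ — that converges to $h(\xi)$ in the Gromov topology of $\overline{\Gamma G_v}$; (3) feed this into the definition of $V_U(h(\xi))$ to conclude $h(z_n)\in V_U(h(\xi))$ for $n$ large. The main obstacle is step (2), i.e.\ translating the large-scale geometric statement "the Gromov product $(\xi\mid z_n)_{u_0}$ in $\Gamma$ tends to infinity" into "the attaching point $a_n$ in $\Gamma G_v$ tends to $h(\xi)$ in $\partial(\Gamma G_v)$"; this requires knowing that a geodesic $[\xi,z_n]$ that starts deep inside $\Gamma G_v$ near $h(\xi)$ and then exits through the edge $\widetilde{e_n}$ must do so at a point $a_n$ which is itself far out along the ray $\rho_k$ in $\Gamma G_v$, for which one uses the quasiconvexity (indeed convexity) of $\Gamma G_v$ in $\Gamma$ together with the thin-triangles / visibility properties of the hyperbolic space $\Gamma G_v$. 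I expect that the convexity of the vertex spaces $\Gamma G_v$ inside $\Gamma$ (which follows from the tree-of-spaces structure, since any path leaving and re-entering $\Gamma G_v$ is strictly longer than the corresponding path staying inside) is the technical fact doing all the heavy lifting here, and the rest is a routine unwinding of the definition of the neighbourhoods $V_U$.
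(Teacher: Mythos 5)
Your proposal is correct and follows essentially the same route as the paper: discard the finitely many indices whose projections branch off $[v_0,v]$ too early via Lemma \ref{LemmaGromovProduct}, then associate to each remaining $z_n$ a point of $\overline{\Gamma G_v}$ (the boundary point itself when $x_n=v$, the attaching point of the exit edge when $x_n\neq v$ — this is exactly the paper's $y_n$), show it converges to $h(\xi)$ in $\overline{\Gamma G_v}$ using the geodesic convexity of the vertex space, and unwind the definition of $V_U(h(\xi))$. The paper merely packages your two cases uniformly through the identity $d(u_0,[\xi,z_n])=d(u_0,u_{k+1})+d(u_{k+1},[\xi,y_n])$, which is the explicit form of your reduction of the Gromov product in $\Gamma$ to one based in $\Gamma G_v$.
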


\begin{proof}
Let $v$ be the vertex of $T$ such that $h(\xi) \in \partial G_v$ (i.e. $v$ is the image of $\xi$ through the projection $\partial\Gamma\to {\cal V}(T)\cup\partial T$). Denote by
$$
[u_0,w_1]\tau_1\ldots[u_{k},w_{k}]\tau_{k}\rho_{k+1}
$$
a geodesic ray in the equivalence class $\xi$, where $\rho_{k+1}$ is a geodesic ray in $\Gamma G_v$ based at the attaching point of $\tau_{k}$ in $\Gamma G_{v}$, which we denote $u_{k+1}$. Let $x_n \in {\cal V}(T) \cup \partial T$ be the projections of $z_n$. By Lemma \ref{LemmaGromovProduct}, we have that for all sufficiently large $n$ the geodesic $[v_0, x_n]$ contains $[v_0,v]$.  For such large enough $n$, put $y_n:=z_n$ if $x_n=v$. If $x_n\ne v$, the equivalence class $z_n$ consists of geodesic rays of form
$$
[u_0,w_1]\tau_1\ldots[u_{k},w_{k}]\tau_{k}[u_{k+1},w^n_{k+1}]\tau_{k+1}\dots,
$$ 
where the vertex $w^n_{k+1}$ is uniquely determined by $z_n$ (i.e. it is common for all geodesic rays as above). For such $n$ put $y_n:=w^n_{k+1}$. It follows from the structure of $\Gamma$ that for all $n$ as above we have
$$
d(u_0,[\xi,z_n])=d(u_0,u_{k+1})+d(u_{k+1},[\xi,y_n]),
$$
and thus, by (3.1.3), $d(u_{k+1},[\xi,y_n])\to\infty$. Since $\Gamma G_v$ is geodesically convex in $\Gamma$, the latter convergence implies that $y_n\to h(\xi)$ in $\overline{\Gamma G_v}$ (here we identify those $y_n$ for which $x_n=v$ with elements $h(y_n)\in\partial(\Gamma G_v)=\partial G_v$). From description of neighbourhoods of $h(\xi)$ in the topology of $\overline\Gamma$ it follows that almost all of $h(z_n)$ belong to any such neighborhood, which clearly means that $h(z_n)\to h(\xi)$.
\end{proof}

It follows from Corollary \ref{continuity1} and Lemma \ref{continuity2} that $h$ is continuous, which completes the proof of Proposition \ref{boundarytopology}.

%From now on, we will identify $\delta\Gamma$ with $\partial G$.

\section{Homeomorphism type of the boundary of a free product.}

This section is devoted to the proof of the following special case of Theorem \ref{Thm1}.

\begin{thm}
For $i=1,2$, let $H_i=A_i*B_i$ be the free product of infinite hyperbolic groups, and suppose that we have homeomorphisms $\partial A_1\cong \partial A_2$ and $\partial B_1\cong \partial B_2$ between Gromov boundaries. Then the boundaries $\partial H_i$ are homeomorphic.
\label{homeofreeprod}
\end{thm}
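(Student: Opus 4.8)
The plan is to build the homeomorphism $\partial H_1 \to \partial H_2$ by constructing a homeomorphism $\delta\Gamma(A_1,B_1) \to \delta\Gamma(A_2,B_2)$ between the combinatorial models provided by Proposition \ref{boundarytopology}, and then composing with the identifications $\partial H_i \cong \delta\Gamma(A_i,B_i)$. The first step is to observe that the Bass–Serre trees $T(A_i,B_i)$ are both isomorphic to the biregular tree with one set of vertices of valence $|A_i|$-ish and the other of valence $|B_i|$-ish; more precisely, since $H_i = A_i * B_i$, the tree $T(A_i,B_i)$ is the $(|A_1|,|B_1|)$-biregular tree when the cosets are counted, but what actually matters is that $T(A_1,B_1)$ and $T(A_2,B_2)$ have the \emph{same} combinatorial structure once we record, at each vertex $v$, which factor it corresponds to. Since $A_1, A_2$ (resp. $B_1, B_2$) need not be isomorphic as groups, the trees are not literally $G$-isomorphic, but they are isomorphic as trees-with-a-2-coloring-of-vertices-up-to-the-cardinalities-of-coset-sets. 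The cardinalities $|A_i/1| = |A_i|$ may differ (e.g. $\mathbb{Z}$ vs. a surface group), so strictly speaking I must be slightly careful: what I really need is a bijection between the edge sets emanating from corresponding vertices, and I will get this by choosing, at each $A$-type vertex, a bijection between the nontrivial cosets $gA/\{1\} \setminus \{A\}$... — no, cleaner: between the edges at an $A$-vertex of $T(A_1,B_1)$ and the edges at the corresponding $A$-vertex of $T(A_2,B_2)$, which is the same as a bijection $A_1 \leftrightarrow A_2$ (as sets). Such a bijection exists iff $|A_1| = |A_2|$, which need not hold. So this naive approach needs a fix.

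The fix is to not demand a tree isomorphism but to exploit that $\partial A_1 \cong \partial A_2$ gives us, for each vertex $v$ of $T(A_1,B_1)$ of $A$-type, a homeomorphism $\partial(\Gamma G_v) \to \partial(\Gamma G_{v'})$ for a corresponding vertex $v'$ in $T(A_2,B_2)$; and the topology of $\delta\Gamma$, as defined in Section 2, refers to the trees $T$ \emph{only} through the combinatorial data of which edges at $v$ are ``glued at a point of $U$'' for neighbourhoods $U$ in $\overline{\Gamma G_v}$. The key structural point I would isolate as a lemma: the homeomorphism type of $\delta\Gamma(A,B)$ depends only on the homeomorphism types of $\partial A$ and $\partial B$, the cardinalities of $A$ and $B$, and the way edges attach. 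But then I must handle differing cardinalities. Here I invoke the result of Papasoglu–Whyte \cite{PapasogluWhyteQI}: it is \emph{not} needed for this special case if I argue directly, but the honest route is: since $A_i, B_i$ are infinite, each has infinitely many cosets, so $T(A_i,B_i)$ is a tree in which every $A$-vertex has valence $|A_i| \geq \aleph_0$ and every $B$-vertex has valence $|B_i| \geq \aleph_0$. The attaching points of the lifted edges at a vertex $\Gamma G_v$ form the orbit of the basepoint under $A_i$ (resp. $B_i$) acting on its Cayley graph, i.e. a countable dense-in-itself... no — they are exactly the vertices of the Cayley graph $\Gamma A_i$. So the ``attaching pattern'' at an $A$-vertex is: countably many edges, one attached at each vertex of $\Gamma A_i$, and the neighbourhood $V_U(\xi)$ swallows those edges attached in $U$.

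The real content, then, is a combinatorial-topological lemma asserting: if $Z$ and $Z'$ are compact metrizable spaces with $Z \cong Z'$, and we form $\delta\Gamma$ from trees of Cayley graphs whose vertex-boundaries are copies of $Z$ (for $A$-vertices) and copies of some $W \cong W'$ (for $B$-vertices), then the resulting $\delta\Gamma$'s are homeomorphic \emph{provided} the combinatorial attaching data can be matched up. To match the attaching data despite $|A_1|$ possibly $\neq |A_2|$, I would argue that the relevant structure is really: a countable dense set of ``attaching vertices'' in each Cayley graph together with, for each, a subtree hanging off — and up to homeomorphism of $\delta\Gamma$ one can reshuffle. Concretely, the cleanest path: reduce to the case $A_1 = A_2 =: A$ and $B_1 = B_2 =: B$ by a ``replacement'' argument showing $\partial(A_1 * B) \cong \partial(A_2 * B)$ whenever $\partial A_1 \cong \partial A_2$, applying it twice. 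For the replacement, build the homeomorphism $\delta\Gamma(A_1,B) \to \delta\Gamma(A_2,B)$ recursively down the tree: fix base vertices $v_0 \in T(A_1,B)$, $v_0' \in T(A_2,B)$ of the same type, fix a homeomorphism $\varphi_{v_0}: \overline{\Gamma G_{v_0}} \to \overline{\Gamma G_{v_0'}}$ (a homeomorphism of compactified Cayley graphs — here I'd need that $\partial A_1 \cong \partial A_2$ extends to $\overline{\Gamma A_1} \cong \overline{\Gamma A_2}$, which is not automatic; but I only need it on the boundary and on the \emph{pattern} of attaching points, not on the graphs themselves). Then the homeomorphism on $\partial A_1 = \partial(\Gamma G_{v_0})$ carries the countable set of edges-at-$v_0$ to edges-at-$v_0'$ in a way that may not respect which boundary point each cluster of edges accumulates to — but since $\partial A_2$ is perfect or a point etc., and attaching points are dense, I can prescribe the matching of edge-germs to be compatible with $\varphi_{v_0}$ on the boundary, then recurse into each subtree.

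The main obstacle I expect is precisely this matching of the \emph{attaching patterns}: a homeomorphism $\partial A_1 \to \partial A_2$ gives no control over the images of the (countably many, dense) attaching points of the edges of $\Gamma A_1$ versus those of $\Gamma A_2$, and the topology on $\delta\Gamma$ is defined in terms of those attaching points. The resolution should be that the attaching points are irrelevant up to homeomorphism: what matters is only that, for every boundary point $\xi$ and every neighbourhood basis at $\xi$, the edges ``clustering toward $\xi$'' can be organized consistently. I would make this precise by choosing, in each $\Gamma A_i$, the attaching points to accumulate onto \emph{every} point of $\partial A_i$ (true, since $A_i$ is infinite hyperbolic, its Cayley graph vertices accumulate everywhere on $\partial A_i$), and then showing that a homeomorphism $\partial A_1 \to \partial A_2$ can be ``thickened'' to a bijection between the edge sets (hence between disjoint unions of subtrees hanging off) that is continuous in the $\delta\Gamma$-topology. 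Setting up this thickening carefully — including the recursion over the whole tree $T$ and checking continuity at boundary points of type $(e_1)$ (ends of $T$) as well as type $(e_2)$ — is where essentially all the work lies; everything else (compactness, the bijection being a homeomorphism by compact-Hausdorff) is then formal. If this direct combinatorial argument proves too delicate, the fallback is to first apply \cite{PapasogluWhyteQI} to replace each $A_i, B_i$ by a standard model and reduce to literal tree isomorphism — but I would prefer the self-contained route and flag the thickening lemma as the crux.
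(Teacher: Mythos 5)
Your outline is essentially the paper's own proof, and the ``thickening lemma'' you flag as the crux is precisely what the paper isolates and proves as Lemma \ref{extension}: a homeomorphism $f\colon\partial G\to\partial H$ between boundaries of infinite hyperbolic groups extends to a bijection $b\colon G\to H$ such that $b\cup f\colon G\cup\partial G\to H\cup\partial H$ is a homeomorphism, proved by a short back-and-forth argument using a map $\pi\colon G\to\partial G$ with $d(g,\pi(g))\to 0$ (i.e.\ exactly the density of the attaching points on the whole boundary that you invoke). Applied to $A_1,A_2$ and $B_1,B_2$, this yields the bijections $\alpha,\beta$ of Corollary \ref{technicallemma}, which match the attaching patterns compatibly with $h$. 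Two points of comparison: first, your worry about $|A_1|\neq|A_2|$ is vacuous --- the factors are infinite and finitely generated, hence countable, so both Bass--Serre trees are the tree of countably infinite valence, and the paper simply builds the isomorphism $\iota\colon T_1\to T_2$ from $\alpha,\beta$ via reduced forms $a_1b_1\cdots a_nb_n\mapsto\alpha(a_1)\beta(b_1)\cdots\alpha(a_n)\beta(b_n)$; second, in place of your recursion down the tree the paper defines $F\colon\delta\Gamma_1\to\delta\Gamma_2$ globally by the same letterwise formula on reduced representatives of points of $\delta_{Stab}\Gamma_1$ and by $\iota$ on $\partial T_1$, then checks continuity directly against the neighbourhood bases $V_U(\xi)$ and $V_n(\eta)$, with compactness of $\delta\Gamma_1$ and Hausdorffness of $\delta\Gamma_2$ upgrading the continuous bijection to a homeomorphism, exactly as you anticipate. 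So the approach is sound and matches the paper; the one substantive piece you left unexecuted is the extension (``thickening'') lemma itself, and its back-and-forth proof is the content you would still need to supply.
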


Before starting the proof, which occupies the next two subsections, we fix some notation.
Let $h_A: \partial A_1 \xrightarrow{\sim} \partial A_2$ and $h_B: \partial B_1 \xrightarrow{\sim} \partial B_2$ be some fixed homeomorphisms of boundaries resulting from the assumptions of the theorem. We denote by $h: \partial A_1 \cup \partial B_1 \ra \partial A_2 \cup \partial B_2$ the homeomorphism induced by $h_A$ and $h_B$.

In what follows, we use the same notation as in the previous two sections. We denote by $T_1$ (respectively $T_2$) the Bass-Serre tree associated to the product $A_1 * B_1$ (respectively $A_2 * B_2$). We also denote by $\Gamma_1=\Gamma(A_1,B_1)$ and $\Gamma_2=\Gamma(A_2,B_2)$ the corresponding trees of Cayley graphs, as described in Subsection 2.1.  
%For each of the classifying spaces involved, we choose a basepoint, which we denote by $x_{A_1}, x_{A_2}, x_{B_1}$ and $x_{B_2}$, respectively.

\subsection{Specifying an isomorphism between Bass-Serre trees.}

Since we deal with infinite finitely generated factor groups, the Bass-Serre tree of each of the splittings $H_i=A_i*B_i$ is abstractly isomorphic to the unique (up to simplicial isomorphism) simplicial tree with infinite countable valence at every vertex. Thus the associated Bass-Serre trees of the two splittings are abstractly isomorphic. In this subsection, we specify an isomorphism between those trees with some additional properties. We will use this isomorphism to describe a homeomorphism between the two boundaries $\partial H_i$. 

Recall that given a hyperbolic group $G$, the natural associated topology turns $G \cup  \partial G$  (where $\partial G$ is the Gromov boundary) into a compact metric space.

\begin{lem} Let $G, H$ be two infinite hyperbolic groups, and let
$f:\partial G\to\partial H$ be a homeo\-morphism
between their Gromov boundaries.
Then there is a bijection $b:G\to H$ such that $b(1)=1$ and
$b\cup f:G\cup\partial G \to H\cup\partial H$
is a homeomorphism.
\label{extension}
\end{lem}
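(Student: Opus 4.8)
The plan is to build the bijection $b:G\to H$ by a back-and-forth enumeration that is compatible, at each finite stage, with the homeomorphism $f$ on the boundary. First I would fix proper left-invariant word metrics on $G$ and $H$, so that $G\cup\partial G$ and $H\cup\partial H$ are the usual compact metric compactifications; convergence of a sequence $g_n$ in $G$ to a boundary point $\xi$ is then governed by the Gromov product, $g_n\to\xi$ iff $(g_n\mid\xi)_1\to\infty$. The key structural fact to record is that $G$, being infinite, is a discrete dense subset of the compact metric space $G\cup\partial G$, and every point of $\partial G$ is a limit of elements of $G$; moreover $G$ has no isolated points in $G\cup\partial G$ precisely when $\partial G\neq\varnothing$, i.e. always for infinite hyperbolic $G$ (if $\partial G=\varnothing$ then $G$ is finite, excluded). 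So $G\cup\partial G$ is a compact metric space in which $G$ is a countable dense subset of isolated points and $\partial G$ is the (closed, nowhere dense) remainder; likewise for $H$.

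Next I would set up the inductive construction. Enumerate $G=\{g_0=1,g_1,g_2,\dots\}$ and $H=\{h_0=1,h_1,h_2,\dots\}$. At stage $n$ I will have defined a partial injection $b_n$ on a finite subset of $G$ with finite image in $H$, with $b_n(1)=1$. To pass to stage $n+1$: if the least-indexed element $g$ of $G$ not yet in the domain has $g=g_k$, I must choose $b_{n+1}(g)\in H$; I choose $h\in H$ minimizing the index, subject to $h$ not being in the current image, so that ``$h$ is close in $H\cup\partial H$ to the image under $f$ of the nearby boundary behaviour of $g$.'' The precise closeness condition I would impose is: for every boundary point $\xi\in\partial G$ and every $\varepsilon>0$, if $g$ lies in a suitably small metric ball around $\xi$, then $h$ should lie in a ball of radius $\varepsilon$ around $f(\xi)$ — but since $g$ is a single point this is vacuous; the real content is asymptotic, so instead I phrase the requirement in terms of a bookkeeping list of ``tasks'': for each pair (a basic open set $U\subseteq\partial H$ from a countable basis, an integer $m$) we eventually ensure that $g\in G$ with $\mathrm{dist}(g,f^{-1}(U))$ small and $|g|\ge m$ gets sent into a controlled neighbourhood. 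Then do the symmetric step for $H$ to keep the map surjective. Taking $b=\bigcup b_n$ gives a bijection $G\to H$ with $b(1)=1$.

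The heart of the argument — and the main obstacle — is proving that $b\cup f$ is continuous at points of $\partial G$ (continuity on $G$ is automatic since $G$ is discrete, and then continuity of the inverse follows by the symmetric role of $G$ and $H$ plus compactness). Concretely: given $\xi\in\partial G$ and a sequence $x_n\to\xi$ in $G\cup\partial G$, I must show $(b\cup f)(x_n)\to f(\xi)$. Splitting into the subsequence lying in $\partial G$ (handled by continuity of $f$ directly) and the subsequence lying in $G$, for the latter I need the construction to have been arranged so that $b(g)$ tracks $f$ of the ``direction'' of $g$: this is where the bookkeeping must be set up carefully so that for each basic neighbourhood $W$ of $f(\xi)$ there is a neighbourhood $V$ of $\xi$ with $b(G\cap V)\subseteq W$ off a finite set. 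I would achieve this by making the stage-$n$ choice respect all finitely many currently-active tasks, and by arranging (using that $f$ is a homeomorphism of compact metric spaces, hence uniformly continuous both ways) that the tasks, once activated, are never violated at later stages. A cleaner alternative I would seriously consider is to avoid explicit back-and-forth: pick any homeomorphism $\phi\colon G\cup\partial G\to H\cup\partial H$ extending... no — there is no reason $f$ extends at all a priori, which is exactly the point — so instead use that a compact metric space with a countable dense set of isolated points is determined, and that any homeomorphism between the remainders extends, by a Kat\v{e}tov/Sierpi\'nski-type argument: enumerate the isolated points of each side and define $b$ by transfinite-free induction matching nearest available points under $f$, proving the extension is a homeomorphism by the sequential criterion above. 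Either way, the one genuinely delicate verification is the continuity of $b\cup f$ at $\partial G$, which the enumeration's closeness clause is designed to force.
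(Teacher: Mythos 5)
Your overall strategy (a back-and-forth enumeration of $G$ and $H$ that sends each group element to an available element of $H$ lying near $f$ of the ``direction'' of that element, then checking continuity sequentially at boundary points) is the same as the paper's. But as written there is a genuine gap, and you point at it yourself: you never pin down a closeness condition that the construction actually enforces. Your first attempt (``if $g$ lies in a small ball around $\xi$ then $h$ lies in an $\varepsilon$-ball around $f(\xi)$'') is, as you note, vacuous for a single $g$; the replacement --- a ``bookkeeping list of tasks'' indexed by basic open sets of $\partial H$ and integers $m$ --- is never specified precisely, and the key verification (that for every neighbourhood $W$ of $f(\xi)$ there is a neighbourhood $V$ of $\xi$ with $b(G\cap V)\subseteq W$ off a finite set) is only asserted to be ``designed to be forced'' by the enumeration. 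Since that verification is the entire content of the lemma, the proposal is a plan rather than a proof. The alternative you sketch (uniqueness of metrizable compactifications of a countable discrete set with a given remainder, extending a given homeomorphism of remainders) is essentially the same statement and would require the same missing argument.

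What closes the gap, and what the paper does, is a simple quantitative device that replaces your ``tasks'': fix metrics $d_G,d_H$ on the compactifications, choose any map $\pi:G\to\partial G$ with $d_G(g,\pi(g))\to 0$ as $g\to\infty$ (possible since $G$ accumulates only on $\partial G$ in the compact space $G\cup\partial G$), and demand the single condition $d_H\bigl(b(g),f(\pi(g))\bigr)\to 0$ as $g\to\infty$. Continuity of $b\cup f$ at any $\xi\in\partial G$ is then a two-line check: if $g_k\to\xi$ then $\pi(g_k)\to\xi$, so $f(\pi(g_k))\to f(\xi)$, so $b(g_k)\to f(\xi)$; together with bijectivity and compactness this gives the homeomorphism. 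The back-and-forth then only has to meet explicit numerical bounds at each step: in the $G$-to-$H$ step send $g_k$ to an unused $h$ with $d_H\bigl(h,f(\pi(g_k))\bigr)<1/k$ (possible since $H$ is dense in $H\cup\partial H$), and in the $H$-to-$G$ step realize $h_k$ as $b(g)$ for an unused $g$ with $d_H\bigl(h_k,f(\pi(g))\bigr)<d_H(h_k,\partial H)+1/k$ (possible since $f(\pi(G))$ is dense in $\partial H$); both bounds tend to $0$, so the condition above holds. Without some such explicit condition your construction cannot be checked. (Two smaller slips: you say $G$ ``has no isolated points in $G\cup\partial G$'' and a few lines later, correctly, that $G$ is a dense set of isolated points --- the former is wrong, every element of $G$ is isolated; and uniform continuity of $f$ plays no role.)
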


\begin{proof}
Let $d_G, d_H$ be metrics on $G \cup \partial G, H \cup \partial H$. Choose any map $\pi:G\ra\partial G$ such that
$\lim_g d_G(g,\pi(g))=0$.
Note that it is sufficient to choose $b$ so that
$$(*) ~~\lim_g d_H\big(b(g),f(\pi(g))\big)=0.$$
Indeed, if we consider a sequence $(g_k)$ of $G$ that converges to $\xi\in\partial G$,
then $\pi(g_k)$ converges to $\xi$ by definition of $\pi$,
 hence $f(\pi(g_k))$ converges to $f(\xi)$ by continuity of
$f$.
Since, by $(*)$, $\lim d_H\big(f(\pi(g_k)), b(g_k)\big)=0$,
it follows that $b(g_k)$ converges to $f(\xi)$,
which shows that $b\cup f:G\cup\partial G \to H\cup\partial H$
is a homeomorphism.

To choose $b$ as above, put $b(1)=1$, order $G\setminus\{1\}$ and $H\setminus\{1\}$ into sequences $(g_k), (h_k)$,
and iterate the following two steps alternately:

\textbf{Step 1.}
Consider the smallest $k$ for which $b(g_k)$ has not yet been defined.
Choose some integer $l$ such that $d_H\big(h_l,f(\pi(g_k))\big) < 1/k$
and such that $h_l$ was not yet chosen as image of any $g_i$ (such an element exists because $H$ is dense in $H\cup\partial H$). Set $b(g_k)=h_l$.

\textbf{Step 2.}
Consider the smallest $k$ for which $h_k$ was not yet chosen as the image
of any $g$, and choose any $g\in G\setminus\{1\}$ such that 
$d_H\big(h_k,f(\pi(g))\big) < d_H\big(h_k,\partial H\big) + 1/k$
and such that $b$ has not yet been defined on $g$ (such an element $g$ exists since $\pi(G)$ is dense in $\partial G$, hence $f(\pi(G))$ is dense in $\partial H$).
Set $b(g)=h_k$.

Then $b$ is obviously a bijection. A straightforward verification shows that $b$ satisfies property ($*$).
\end{proof}

As a consequence, viewing each group as the vertex set of its Cayley graph, and the group unit as the base vertex of this graph, we get the following.

\begin{cor}
There exists a bijection $\alpha: A_1 \ra A_2$ (respectively $\beta: B_1 \ra B_2$) such that $\alpha(1)=1$ (respectively $\beta(1)=1$), and the following property holds: 

Let $\xi \in \partial A_2$ (respectively $\xi\in\partial B_2$) and let $U_2$ be an open neighbourhood of $\xi$ in $\overline{\Gamma A_2}$ (respectively, in $\overline{\Gamma B_2}$). Then there exists a neighbourhood $U_1$ of $h^{-1}(\xi)$ in $\overline{\Gamma A_1}$ (respectively $\overline{\Gamma B_1}$) such that for every element $a \in A_1\cap U_1$ (respectively $b \in B_1\cap U_1$)  we have $\alpha(a) \in U_2$ (respectively $\beta(b)\in U_2$). 
\label{technicallemma}
\end{cor}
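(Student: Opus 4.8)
The plan is to deduce the Corollary directly from Lemma~\ref{extension} applied to the homeomorphisms $h_A:\partial A_1\to\partial A_2$ and $h_B:\partial B_1\to\partial B_2$, translating the metric-continuity statement provided by the lemma into the neighbourhood-basis language in which the Corollary is phrased. First I would apply Lemma~\ref{extension} to $f=h_A^{-1}:\partial A_2\to\partial A_1$ (or equivalently to $h_A$; one only needs a bijection together with a homeomorphism of compactifications, and a homeomorphism is invertible), obtaining a bijection $\alpha_0:A_2\to A_1$ with $\alpha_0(1)=1$ such that $\alpha_0\cup h_A^{-1}:A_2\cup\partial A_2\to A_1\cup\partial A_1$ is a homeomorphism; set $\alpha:=\alpha_0^{-1}:A_1\to A_2$, so that $\alpha\cup h_A:A_1\cup\partial A_1\to A_2\cup\partial A_2$ is a homeomorphism with $\alpha(1)=1$. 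Define $\beta:B_1\to B_2$ symmetrically from $h_B$.

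Next I would observe that, since $\Gamma A_i$ is quasi-isometric (indeed bi-Lipschitz on the vertex set, with the vertex set being $A_i$) to $A_i$ with its word metric, the compactification $\overline{\Gamma A_i}$ is canonically homeomorphic to $A_i\cup\partial A_i$ in a way compatible with the identification of vertices of $\Gamma A_i$ with elements of $A_i$ and of the two boundary sets; the edges of $\Gamma A_i$ contribute nothing new to the topology at infinity. Under this identification, the homeomorphism $\alpha\cup h_A$ becomes a homeomorphism $\overline{\Gamma A_1}\to\overline{\Gamma A_2}$ restricting to $\alpha$ on vertices and to $h_A$ on the boundary; likewise for $\beta\cup h_B$ and $\overline{\Gamma B_1}\to\overline{\Gamma B_2}$.

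Now the required property is immediate. Take $\xi\in\partial A_2$ and an open neighbourhood $U_2$ of $\xi$ in $\overline{\Gamma A_2}$ (the case $\xi\in\partial B_2$ is identical, replacing $\alpha,h_A$ by $\beta,h_B$). Let $\Phi:\overline{\Gamma A_1}\to\overline{\Gamma A_2}$ denote the homeomorphism constructed above, so $\Phi|_{\partial A_1}=h_A$ and $\Phi|_{A_1}=\alpha$. Since $\Phi$ is continuous and $h_A^{-1}(\xi)=\Phi^{-1}(\xi)$, the set $U_1:=\Phi^{-1}(U_2)$ is an open neighbourhood of $h^{-1}(\xi)=h_A^{-1}(\xi)$ in $\overline{\Gamma A_1}$. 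For any vertex $a\in A_1\cap U_1$ we have $\alpha(a)=\Phi(a)\in U_2$, which is exactly the asserted property. (Here we use that $h$ restricted to $\partial A_1$ equals $h_A$, so $h^{-1}(\xi)=h_A^{-1}(\xi)$ for $\xi\in\partial A_2$.)

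The only genuinely non-formal point is the identification of $\overline{\Gamma A_i}$ with $A_i\cup\partial A_i$ used to transport the homeomorphism of Lemma~\ref{extension}; I expect this to be the main — though still routine — obstacle, since one must check that collapsing/inserting the edges of the Cayley graph does not alter the compactification. This follows from the fact that $\Gamma A_i$ and $A_i$ are $A_i$-equivariantly quasi-isometric (the inclusion of the vertex set is a quasi-isometry), so their Gromov boundaries are canonically homeomorphic, and a point of $\Gamma A_i$ lying on an edge is within bounded distance of a vertex, so sequences in $\Gamma A_i$ converging to a boundary point can be replaced by sequences of vertices converging to the same point. Everything else is a direct unwinding of definitions.
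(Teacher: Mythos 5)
Your first step (applying Lemma \ref{extension} to $h_A$, resp.\ $h_B$, to get $\alpha$ with $\alpha\cup h_A:A_1\cup\partial A_1\to A_2\cup\partial A_2$ a homeomorphism) is exactly the paper's starting point. But the way you finish has a genuine gap: you claim that $\overline{\Gamma A_i}$ is ``canonically homeomorphic'' to $A_i\cup\partial A_i$, and then promote $\alpha\cup h_A$ to a homeomorphism $\Phi:\overline{\Gamma A_1}\to\overline{\Gamma A_2}$ so that $U_1:=\Phi^{-1}(U_2)$ is open in $\overline{\Gamma A_1}$. This is false: $\overline{\Gamma A_i}$ compactifies the connected graph $\Gamma A_i$, while $A_i\cup\partial A_i$ compactifies the discrete set $A_i$, so the two spaces are not homeomorphic (quasi-isometry invariance gives $\partial\Gamma A_i\cong\partial A_i$, not a homeomorphism of compactifications). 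Moreover, no homeomorphism $\overline{\Gamma A_1}\to\overline{\Gamma A_2}$ restricting to $\alpha$ on vertices need exist at all: $\alpha$ is just a bijection produced by a back-and-forth argument and has no reason to preserve adjacency, and the two Cayley graphs may not even be homeomorphic (different vertex valences). So the object $\Phi$ on which your choice of $U_1$ rests is not available, and the ``routine obstacle'' you flag is in fact the point where the argument breaks.

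The statement can be rescued without $\Phi$, and this is what the paper does: regard $A_1\cup\partial A_1$ as a compact subspace of $\overline{\Gamma A_1}$ (its subspace topology is the standard one), note that $\bar\alpha^{-1}\bigl(U_2\cap(A_2\cup\partial A_2)\bigr)$ is a relatively open neighbourhood of $h^{-1}(\xi)$ there, and let $K=(A_1\cup\partial A_1)\setminus\bar\alpha^{-1}(U_2)$. Then $K$ is compact and does not contain $h^{-1}(\xi)$, so one can choose a neighbourhood $U_1$ of $h^{-1}(\xi)$ in $\overline{\Gamma A_1}$ disjoint from $K$; for any $a\in A_1\cap U_1$ one then has $a\in\bar\alpha^{-1}(U_2)$, i.e.\ $\alpha(a)\in U_2$. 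In other words, you must thicken the pulled-back relative neighbourhood to an open set of the ambient compactification (by compactness, or simply by the definition of the subspace topology), rather than pull $U_2$ back under a homeomorphism of the full compactifications, which does not exist.
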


\begin{proof}
We prove only the part concernig existence of $\alpha$ (the corresponding part
for $\beta$ clearly follows by the same argument). Let $\alpha:A_1\to A_2$
be any bijection as in Lemma \ref{extension}, i.e. such that $\bar\alpha:=\alpha\cup h_A:A_1\cup\partial A_1\to A_2\cup\partial A_2$ is a homeomorphism. We will show that this $\alpha$ is as required. Viewing $A_i\cup\partial A_i$ as subspaces in $\Gamma A_i$, we get  that the preimage $\bar\alpha^{-1}(U_2)$ is an open neighbourhood of $h^{-1}(\xi)$ in $A_1\cup\partial A_1$. Since the complement of this set, $K=(A_1\cup\partial A_1)\setminus \bar\alpha^{-1}(U_2)$, is compact, and since $h^{-1}(\xi)\in \overline{\Gamma A_1}\setminus K$, there is a neighbourhood $U_1$ of $h^{-1}(\xi)$ in $\overline{\Gamma A_1}$ disjoint from $K$, and consequently such that
$U_1\subset \bar\alpha^{-1}(U_2)$. One verifies directly that this $U_1$ is as required.
\end{proof}

Given bijections $\alpha,\beta$ as above, we can now define a specific isomorphism $\iota: T_1 \ra T_2$ as follows. For $i=1, 2$, 
let $\tau_i=[u_i,v_i]$ be the edge of $T_i$ such that $u_i,v_i$ are stabilized by $A_i,B_i$ respectively.
Recall that each element $g\in H_1$ can be expressed uniquely, in {\it reduced form}, as
$g=a_1b_1\ldots a_nb_n$, with $a_j\in A_1\setminus \{1\}, b_j\in B_1\setminus\{1\}$, allowing also that $a_1=1$ and that $b_n=1$.
It is not hard to realize that there is an isomorphism $T_1\to T_2$
which maps any edge $a_1b_1\ldots a_nb_n\cdot\tau_1$
to the edge $\alpha(a_1)\beta(b_1)\ldots \alpha(a_n)\beta(b_n)\cdot\tau_2$,
and it is obviously unique. 
We denote this isomorphism by $\iota$, and we observe that $\iota(v_1)=v_2$.

\subsection{Construction of the homeomorphism $\partial(A_1*B_1)\to\partial(A_2*B_2)$.}

To show that the Gromov boundaries $\partial H_1, \partial H_2$ are homeomorphic, it is sufficient to describe a homeomorphism $\delta\Gamma_1\to\delta\Gamma_2$. To do this, we need the notion of a {\it reduced representative} of an element $[g,\xi]\in\delta_{Stab}\Gamma_1$. Note that the set of all representatives
of $[g,\xi]$ has the form $(ga,a^{-1}\xi):a\in A_1$ when $\xi\in\partial A_1$,
and the form $(gb,b^{-1}\xi):b\in B_1$ when $\xi\in\partial B_1$. In any case we choose for the reduced representatitive this pair $(g',\xi')$ in which
the reduced form of $g'$ is the simplest one among the elements of the corresponding coset. More precisely, when $\xi\in\partial A_1$,
we choose the unique $ga=a_1b_1\dots a_nb_n$ for which $n$ is the smallest possible (in which case we have $b_n\ne1$), and when $\xi\in\partial B_1$, we choose the unique $gb=a_1b_1\dots a_nb_n$ for which $b_n=1$.

\medskip
Let $\alpha: A_1 \ra A_2$, $\beta:B_1 \ra B_2$, $\tau_1$, $v_1$ and $\iota:T_1\to T_2$
be as in the previous subsection.  
We now define a map $F: \delta\Gamma_1 \ra \delta\Gamma_2$ as follows.  

\begin{itemize}
\item Let $[g, \xi]$ be an element of $\delta_{Stab}\Gamma_1$, such that $(g, \xi)$ is its reduced representative. Write the reduced expression $g=a_1b_1\ldots a_nb_n$. We set
$$ F( [a_1b_1\ldots a_nb_n, \xi]) = [\alpha(a_1)\beta(b_1)\ldots\alpha(a_n)\beta(b_n), h(\xi)].
$$

\item Let $\eta$ be an element of $\partial T$. We can represent it as an infinite word $\eta = a_1b_1 \ldots$,  such that for each $n$ the subword consisting of its first $n$ letters corresponds to the $n$-th edge of the geodesic from $v_1$ to $\eta$ (via the correspondence $g\to g\cdot\tau_1$). We set 
$$
F( a_1b_1 \ldots ) = \alpha(a_1)\beta(b_1) \ldots 
$$ 
where the infinite word on the right is similarly interpreted as a geoedesic ray in $T_2$ started at $v_2$.
Note that this amounts to defining the restriction of $F$ to $\partial T_1$ as the map  $\partial T_1 \ra \partial T_2$ induced by the isomorphism $\iota: T_1 \ra T_2$.
\end{itemize}

Compatibility of the above described map $F$ with the isomorphism $\iota$ manifests also by the following: for any vertex $v$ of the tree $T_1$ the subset $\partial(H_1)_v\subset\delta\Gamma_1$ is mapped by $F$ bijectively to the subset $\partial(H_2)_{\iota(v)}\subset \delta\Gamma_2$.

\begin{prop}
The map $F$ is a homeomorphism. 
\label{homeomorphism}
\end{prop}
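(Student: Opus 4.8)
The plan is to show that $F$ is a continuous bijection from the compact space $\delta\Gamma_1$ to the Hausdorff space $\delta\Gamma_2$; such a map is automatically a homeomorphism. Bijectivity is essentially formal: the inverse map is built from $\alpha^{-1}$, $\beta^{-1}$, $h^{-1}$ and $\iota^{-1}$ exactly as $F$ is built from $\alpha,\beta,h,\iota$, using the uniqueness of reduced representatives of elements of $\delta_{Stab}\Gamma_1$ and of reduced words coding ends of $T_1$. So the real content is continuity of $F$ (its inverse is continuous by the same argument with the roles of the two sides swapped, or directly by the compact-to-Hausdorff principle once we know $F$ is a continuous bijection). Since both spaces are compact metrizable (Proposition \ref{boundarytopology}), it suffices to check sequential continuity, so I would fix a sequence $(z_n)$ in $\delta\Gamma_1$ converging to a point $z$ and show $F(z_n)\to F(z)$.

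Following the split used in Section 3, I would treat two cases according to the type of $z$. \emph{Case 1: $z=\eta\in\partial T_1$.} Here the neighbourhood basis of $h(\eta)=F(\eta)$ in $\delta\Gamma_2$ is given by the sets $V_m(\iota(\eta))$, which are pullbacks under the projection $p_2:\delta\Gamma_2\to {\cal V}(T_2)\cup\partial T_2$ of neighbourhoods of $\iota(\eta)$ in $T_2\cup\partial T_2$. By the definition of the topology on $\overline{\Gamma_1}$, convergence $z_n\to\eta$ forces the projections $x_n=p_1(z_n)$ to converge to $\eta$ in $T_1\cup\partial T_1$; since $\iota$ is a simplicial isomorphism it is continuous on $T_1\cup\partial T_1$, so $\iota(x_n)\to\iota(\eta)$, and since $p_2(F(z_n))=\iota(x_n)$ by the compatibility of $F$ with $\iota$, we get $F(z_n)\in V_m(\iota(\eta))$ for all large $n$. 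This case uses no property of $\alpha,\beta$ beyond the fact that they induce $\iota$.

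\emph{Case 2: $z=\xi\in\delta_{Stab}\Gamma_1$, say $\xi\in\partial (H_1)_v$ for a vertex $v$ of $T_1$}, with $F(\xi)\in\partial(H_2)_{\iota(v)}$. A basic neighbourhood of $F(\xi)$ in $\delta\Gamma_2$ has the form $V_{U_2}(F(\xi))$ for $U_2$ a neighbourhood of $F(\xi)$ in $\overline{\Gamma (H_2)_{\iota(v)}}$. After translating by a group element we may assume $v=v_1$ (or $u_1$), so that $\overline{\Gamma (H_1)_v}$ is $\overline{\Gamma A_1}$ or $\overline{\Gamma B_1}$ and Corollary \ref{technicallemma} applies: there is a neighbourhood $U_1$ of $h^{-1}(F(\xi))=\xi$ in $\overline{\Gamma A_1}$ (resp.\ $\overline{\Gamma B_1}$) such that $\alpha$ (resp.\ $\beta$) carries the group elements of $U_1$ into $U_2$. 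I would then argue: for $n$ large, either $z_n\in V_{U_1}(\xi)$ with projection equal to $v$, in which case $z_n\in U_1$ and by Corollary \ref{technicallemma} together with $\overline\alpha$ being a homeomorphism we get $F(z_n)\in U_2\subset V_{U_2}(F(\xi))$; or $z_n$ has projection $\ne v$, in which case membership of $z_n$ in $V_{U_1}(\xi)$ says the geodesic in $T_1$ from $v$ to $p_1(z_n)$ starts with an edge whose lift is attached to $\Gamma (H_1)_v$ at a \emph{group} vertex lying in $U_1\cap A_1$ (resp.\ $\cap B_1$) — and then $\iota$ sends that first edge to an edge of $T_2$ attached to $\Gamma (H_2)_{\iota(v)}$ at $\alpha(\cdot)$ (resp.\ $\beta(\cdot)$), which by Corollary \ref{technicallemma} lies in $U_2$, so $F(z_n)\in\widetilde V_{U_2}\subset V_{U_2}(F(\xi))$. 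The point is that the attaching points of lifted edges are exactly the nontrivial group elements, and $\iota$ was built precisely so as to act on these by $\alpha$ and $\beta$; this is where the special choice of $\alpha,\beta$ from Lemma \ref{extension}/Corollary \ref{technicallemma} is essential, and I expect the careful bookkeeping of attaching points (and the reduction to $v=v_1$ or $u_1$ by equivariance) to be the main obstacle, rather than any deep idea.

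Finally, since $F$ is a continuous bijection from a compact space to a Hausdorff space, $F$ is a homeomorphism, completing the proof of Proposition \ref{homeomorphism}.
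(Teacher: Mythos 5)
Your proposal is correct and follows essentially the same route as the paper: bijectivity is immediate, continuity is checked at the two kinds of boundary points using the basic neighbourhoods $V_{U}(\xi)$ and $V_n(\eta)$ together with Corollary \ref{technicallemma} and the compatibility of $F$ with $\iota$, and the inverse is handled by the compact-to-Hausdorff principle (the paper phrases the continuity check via neighbourhoods rather than sequences, which is only a cosmetic difference). One tiny slip: attaching points of lifted edges at a vertex space are all group elements, including the identity (this is why $\alpha(1)=\beta(1)=1$ is arranged), not just the nontrivial ones, but this does not affect the argument.
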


\begin{proof} The map $F$ is clearly bijective, so we need to prove that it is continuous. 

Let $\xi \in \delta_{Stab }\Gamma_1$ and let $v$ be the vertex of $T_1$ such that $\xi \in \partial (H_1)_v$. Let $U_2$ be a neighbourhood of $F(\xi)$ in $\overline{\Gamma(H_2)_{\iota(v)}}$. Let $U_1$ be a neighbourhood of $\xi$ in $\overline{\Gamma(H_1)_{v}}$ as in Corollary \ref{technicallemma} (where we identify canonically $\overline{\Gamma(H_1)_{v}}$ with $\overline{\Gamma A_1}$ or $\overline{\Gamma B_1}$,
and $\overline{\Gamma(H_2)_{\iota(v)}}$ with $\overline{\Gamma A_2}$ or $\overline{\Gamma B_2}$). It is straightforward to check that $F\big( V_{U_1}(\xi)\cap\delta\Gamma_1 \big) \subset V_{U_2}(F(\xi))\cap\delta\Gamma_2$, and hence $F$ is continuous at $\xi$.

Let $\eta \in \partial T_1$. The isomorphism $\iota: T_1 \ra T_2$ extends to a map $\iota: T_1 \cup \partial T_1 \ra T_2 \cup \partial T_2$ so that $\iota|_{\partial T_1}$ coincides with $F|_{\partial T_1}$. Let $n \geq 1$ be an integer, and consider the subtree $(T_2)_n\big( F(\eta) \big) \subset T_2$, defined with respect to the base vertex $v_2$. Then $\iota^{-1} \big( (T_2)_n\big( F(\eta) \big) \big)=(T_1)_n(\eta)$, where the latter subtree is defined with respect to the base vertex $v_1$. 
It is then straightforward to check that $F\big( V_n(\eta)\cap\delta\Gamma_1 \big)= V_n\big(F(\eta)\big)\cap\delta\Gamma_2$, and hence $F$ is continuous at $\eta$. 
\end{proof}

This completes the proof of Theorem \ref{homeofreeprod}.

\section{The proof of Theorem \ref{Thm1}.}

The proof is based on Proposition \ref{boundarytopology} and the following result of P. Papasoglu and K. Whyte (see \cite{PapasogluWhyteQI}).

\begin{thm} [Papasoglu-Whyte]
Let $G,H$ be finitely generated groups with infinitely many ends and let ${\cal G},{\cal H}$ be their graph of groups decompositions with all edge groups finite. If $\cal G$, $\cal H$ have the same set of quasi-isometry types of infinite vertex groups (without multiplicities) then $G$ and $H$ are quasi-isometric. \qed
\label{PapasogluWhyte}
\end{thm}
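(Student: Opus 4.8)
The plan is to reduce the statement to a purely geometric comparison of trees of spaces and then to prove a \emph{homogenization} principle: such a tree of spaces is determined up to quasi-isometry by the mere \emph{set} of quasi-isometry types of its infinite vertex spaces. First I would replace each group by a geometric model adapted to its splitting. Since all edge groups of $\mathcal G$ are finite, the Bass--Serre tree of spaces $X_{\mathcal G}$ --- the tree $T_{\mathcal G}$ with each vertex blown up to the Cayley graph of the corresponding vertex group and each (finite) edge group collapsed to a bounded gluing region --- is $G$-equivariantly quasi-isometric to $G$, exactly as in Section 2 for the free-product case. The finite vertex groups contribute vertex spaces of bounded diameter, which may be absorbed into the edges; after this reduction the quasi-isometry type of $X_{\mathcal G}$ is governed by the underlying tree $T_{\mathcal G}$, decorated at certain vertices by the infinite vertex spaces, whose quasi-isometry types range over the finite set $S$ in the hypothesis. (Using accessibility one may even arrange these pieces to be one-ended, though this is not essential.)

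Second, I would pass to a canonical model. Because $G$ has infinitely many ends and acts cocompactly on $T_{\mathcal G}$, each quasi-isometry type occurring in $S$ is carried by infinitely many vertices, and $T_{\mathcal G}$ is \emph{bushy} (it has infinitely many ends and exponential branching). These two features --- infinite multiplicity of every decoration together with a bushy underlying tree --- are all that survives up to quasi-isometry. I would therefore introduce a standard decorated tree of spaces $Y_S$ built over a fixed bushy tree in which every vertex carries exactly one copy of each space type in $S$, and reduce the theorem to showing $X_{\mathcal G}$ is quasi-isometric to $Y_S$. Since $Y_S$ depends only on $S$ and the hypothesis gives the same $S$ for $\mathcal G$ and $\mathcal H$, this yields quasi-isometries $X_{\mathcal G}\ra Y_S \ra X_{\mathcal H}$, and hence $G$ is quasi-isometric to $H$.

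The heart of the argument is the construction of the quasi-isometry $X_{\mathcal G}\ra Y_S$. I would build it from two compatible ingredients: a quasi-isometry $T_{\mathcal G}\ra T_{Y_S}$ of the underlying trees, and a \emph{matching} of decorated vertices under which every vertex of $T_{\mathcal G}$ is sent to a vertex of $T_{Y_S}$ carrying a quasi-isometric space. Choosing, for each matched pair, a quasi-isometry between the two vertex spaces with uniformly bounded constants (possible since there are only finitely many types), one glues these together over the tree map. Because the gluing regions are bounded, distance in a tree of spaces is comparable to the sum of the distance travelled in the tree and the distances travelled inside the vertex spaces, so the glued map is coarsely Lipschitz, coarsely surjective, and admits a coarse inverse built the same way. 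The matching itself is produced by a back-and-forth exhaustion of the two decorated trees, level by level from chosen base vertices, which never gets stuck precisely because each type occurs with infinite multiplicity and both trees have infinite branching.

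The main obstacle is this last homogenization step: proving that the precise shape of $T_{\mathcal G}$ and the multiplicities with which the types in $S$ appear are irrelevant, so that any two bushy decorated trees with the same type set $S$ give quasi-isometric spaces. The delicate point is to run the back-and-forth so that the tree map stays a genuine quasi-isometry --- with bounded distortion of distances and bounded failure of surjectivity at every branching vertex --- while simultaneously honouring the constraint that matched vertices carry spaces of the same type; one must balance the vertex-type bookkeeping against the tree geometry with constants independent of the (infinitely many) pieces. Finally I would dispose of the degenerate case $S=\emptyset$ separately: then both groups are virtually free with infinitely many ends, and the claim reduces to the classical fact that all such groups are quasi-isometric to a regular tree.
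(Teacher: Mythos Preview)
The paper does not prove this theorem: it is quoted as a result of Papasoglu and Whyte \cite{PapasogluWhyteQI}, with no argument given (note the terminal \qed\ in the statement). There is therefore nothing in the paper itself to compare your proposal against.

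For what it is worth, your outline is close in spirit to the actual Papasoglu--Whyte argument. They too model each group by its Bass--Serre tree of spaces over a bushy tree, absorb the finite vertex groups, and reduce to showing that the quasi-isometry type of such a tree of spaces depends only on the set $S$ of quasi-isometry types of its infinite vertex pieces. The step you flag as the crux --- producing a decoration-respecting map of trees that is simultaneously a quasi-isometry with uniform constants --- is their main technical point as well. Their implementation, however, differs from a raw back-and-forth: rather than building the map in one infinite process whose constants must be controlled throughout, they show that certain local moves on the decomposition (duplicating a factor already present, absorbing a virtually cyclic factor, etc.) do not change the quasi-isometry type, and then bring any decomposition to a standard one by finitely many such moves; the underlying tree-level comparison then reduces to the fact that any two bushy trees of bounded geometry are bilipschitz. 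Your sketch is a plausible alternative route, but making the back-and-forth rigorous with uniform constants is genuine work, and you have correctly located that as the gap to be filled.
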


Theorem \ref{PapasogluWhyte} implies in particular that if $G$ is any group distinct from $\bbZ_2$ then the groups $G*\bbZ \cong G*_1$ and $G*G$ are quasi-isometric.
In view of this, Theorem \ref{PapasogluWhyte} has also the following consequence.

\begin{cor}
Under assumptions and notation of Theorem \ref{PapasogluWhyte}, if groups $G_1,\dots,G_m$ represent all quasi-isometry types of those infinite vertex groups of $\cal G$ which are not virtually cyclic then

\begin{itemize}

\item if $m=0$ then $\pi_1{\cal G}$ is quasi-isometric to  the free group $F_2$;

\item if $m=1$ then $\pi_1{\cal G}$ is quasi-isometric to $G_1*G_1$;

\item if $m>1$ then $\pi_1{\cal G}$ is quasi-isometric to $G_1*\dots*G_m$. \qed

\end{itemize}
\label{PapasogluWhyteCor}
\end{cor}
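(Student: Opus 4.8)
The plan is a short case analysis on $m$. The underlying idea in each case is to produce an auxiliary group, quasi-isometric to $\pi_1{\cal G}$, which admits a graph-of-groups decomposition with trivial edge groups having exactly the same \emph{set} of quasi-isometry types of infinite vertex groups as the claimed model ($F_2$, $G_1*G_1$, or $G_1*\cdots*G_m$), and then to conclude by Theorem \ref{PapasogluWhyte}; the point is that this theorem is insensitive to multiplicities of quasi-isometry types, so $\bbZ$-factors can be both created and absorbed at will.

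First I would record two preliminary remarks. (a) Every infinite vertex group of ${\cal G}$ is either virtually cyclic, hence two-ended and hence quasi-isometric to $\bbZ$, or not virtually cyclic, hence quasi-isometric to one of $G_1,\dots,G_m$; therefore the set $S$ of quasi-isometry types of infinite vertex groups of ${\cal G}$ is either $\{[G_1],\dots,[G_m]\}$ or $\{[G_1],\dots,[G_m],[\bbZ]\}$. (b) Each of $F_2$, $G_1*G_1$ and $G_1*\cdots*G_m$ has infinitely many ends: this is clear for $F_2$, while a free product of two infinite groups is infinitely ended, so $G_1*G_1$ and (since $m\ge 2$ in the relevant case) $G_1*\cdots*G_m$ qualify. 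In particular every group $G_i$, being infinite, is distinct from $\bbZ_2$, so the observation preceding the corollary ($G*\bbZ$ quasi-isometric to $G*G$ for $G\ne\bbZ_2$) applies to $G=G_i$ and to $G=G_1*\cdots*G_m$.

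Now the cases. If $m=0$, then $S\subseteq\{[\bbZ]\}$; I compare ${\cal G}$ either with the wedge-of-two-circles decomposition of $F_2$ (when $S=\emptyset$) or with the decomposition $F_2=\bbZ*\bbZ$ (when $S=\{[\bbZ]\}$), and Theorem \ref{PapasogluWhyte} gives $\pi_1{\cal G}$ quasi-isometric to $F_2$. If $m=1$ and $S=\{[G_1]\}$, I compare ${\cal G}$ with the decomposition of $G_1*G_1$ into two vertices $G_1$ and conclude directly; if $S=\{[G_1],[\bbZ]\}$, I first compare ${\cal G}$ with the decomposition $G_1*\bbZ$ into vertices $G_1$ and $\bbZ$, obtaining $\pi_1{\cal G}$ quasi-isometric to $G_1*\bbZ$, and then apply the observation preceding the corollary to get $G_1*\bbZ$ quasi-isometric to $G_1*G_1$. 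If $m>1$ and $S=\{[G_1],\dots,[G_m]\}$, Theorem \ref{PapasogluWhyte} applied to ${\cal G}$ and the decomposition of $G_1*\cdots*G_m$ into $m$ vertices finishes the proof; if $S=\{[G_1],\dots,[G_m],[\bbZ]\}$, I first get $\pi_1{\cal G}$ quasi-isometric to $G_1*\cdots*G_m*\bbZ$, then use the observation to replace this by $(G_1*\cdots*G_m)*(G_1*\cdots*G_m)$, which has a decomposition into $2m$ vertices whose set of infinite-vertex-group quasi-isometry types is again $\{[G_1],\dots,[G_m]\}$, so a final application of Theorem \ref{PapasogluWhyte} identifies it, up to quasi-isometry, with $G_1*\cdots*G_m$.

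There is no serious obstacle here; the only thing requiring attention is the bookkeeping in (a)--(b) and the routine check that each auxiliary group invoked genuinely has infinitely many ends (and finite, in fact trivial, edge groups), so that Theorem \ref{PapasogluWhyte} applies — a free product of two infinite groups always does.
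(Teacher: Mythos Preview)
Your argument is correct and is precisely the unpacking of what the paper leaves implicit: the paper's ``proof'' consists only of the observation that $G*\bbZ$ and $G*G$ are quasi-isometric for $G\ne\bbZ_2$, together with the phrase ``In view of this'', and your case analysis on $m$ and on whether $[\bbZ]\in S$ is exactly how one fills in those details using Theorem~\ref{PapasogluWhyte}. The only cosmetic shortcut you might take in the case $m>1$ with $[\bbZ]\in S$ is to skip the intermediate step through $G_1*\cdots*G_m*\bbZ$ and compare ${\cal G}$ directly with the $(2m{+}1)$-vertex decomposition $G_1*\cdots*G_m*G_1*\cdots*G_m*\bbZ$, but what you wrote is fine.
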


\noindent
{\bf Proof of Theorem \ref{Thm1}.}
Recall that, by the assumption, the groups $\pi_1({\cal G}_i)$ have infinitely many ends. Recall also that quasi-isometric hyperbolic groups have homeomorphic Gromov boundaries, and that a group is quasi-isometric to a virtually cyclic group iff it is virtually cyclic. Thus, if the case $m=0$ in Corollary \ref{PapasogluWhyteCor} applies to ${\cal G}_1$, it also applies to ${\cal G}_2$, and the theorem follows in this case from the corollary. In the remaining cases, not changing the quasi-isometry types, we replace the groups $\pi_1({\cal G}_i)$ with the corresponding free products of infinite vertex groups, accordingly with Corollary \ref{PapasogluWhyteCor}. By applying Theorem \ref{PapasogluWhyte} again, we get that for some integer $p\ge2$ and for $i=1,2$  the groups $\pi_1({\cal G}_i)$ are quasi-isometric to some free products of infinite groups $H_i=G_{i,1}*\dots*G_{i,p}$ such that for $j=1,\dots,p$ the groups $G_{1,j},G_{2,j}$ have homeomorphic Gromov boundaries. The theorem then follows by induction from Theorem \ref{homeofreeprod}.\qed

\section{Hyperbolic groups with homeomorphic boundaries.}

In this section, after a few preparations, we prove Theorem \ref{Thm2}.

%\subsection{Some properties of the boundary of a free product.}

We start with a brief comment that all of the content of Section 2 can be extended to the case of the free product of finitely many hyperbolic groups.
Let $G=A_1*\dots A_k$ be the free product of hyperbolic groups, and let $\cal G$ be  a graph of groups representing this product. More precisely, $\cal G$
is a graph of groups whose underlying graph $Q$ is a tree with the vertex set $\{ v_1,\dots,v_k \}$, all the edge groups in $\cal G$ are trivial, and for $j=1,\dots,k$ the vertex group $G_{v_j}$ of $\cal G$ is isomorphic to $A_j$.
Let $T$ be the Bass-Serre tree of $\cal G$. We then have a geometric model $\Gamma=\Gamma({\cal G})$ for $G$, in the form of a tree of spaces over $T$, constructed out of the Cayley graphs $\Gamma A_j$ in the way presented in Subsection 2.1. More precisely, we describe first an analog of the graph $X$ from Subsection 2.1, as the union of the Cayley graphs $\Gamma A_j$ connected together with segments joining the base vertices and corresponding to the edges in the tree $Q$. Then $\Gamma=\Gamma({\cal G})$ is the appropriate quotient of the product $G\times X$. 
%In particular, we have the natural projection $\Gamma\to T$ and we denote the preimage of a vertex $v$ of $T$ by $\Gamma G_v$.

We next describe the boundary $\delta\Gamma$, as in Subsection 2.2, consisting of two disjoint parts $\partial T$ and $\delta_{Stab}\Gamma$, and equip it with the natural $G$-action and the natural projection $p:\delta\Gamma\to {\cal V}(T)\cup\partial T$. This gives us subsets $\partial G_v\subset\delta\Gamma$ as preimages of vertices $v$ of $T$ under $p$. Finally, we also consider the topology on the compactification $\overline{\Gamma}=\Gamma\cup\delta\Gamma$, described in the same way as in Subsection 2.2, and we mention that Proposition \ref{boundarytopology} literally extends to this more general framework.

We are now ready to state two basic topological properties of $\delta\Gamma$,
whose straightforward proofs we omit.

\begin{lem}
For any $g\in G$ and any $j\in\{ 1,\dots,k \}$, the map $f=f_{g,j}:\partial A_j\to\delta\Gamma$ given by $f(x)=[g,x]$ (where  $[g,x]$ is the image of $(g,x)$ under the quotient map $G\times(\partial A_1\cup\dots\cup \partial A_k)\to\delta_{Stab}\Gamma\subset\delta\Gamma$) is continuous.\qed
\label{technicallemma2}
\end{lem}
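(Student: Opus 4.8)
\textbf{Proof plan for Lemma \ref{technicallemma2}.}

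The plan is to verify continuity of $f=f_{g,j}$ at an arbitrary point $x\in\partial A_j$ directly from the description of the basic open neighbourhoods of points of $\delta_{Stab}\Gamma$ given in Subsection 2.2. Fix such an $x$, put $\xi=f(x)=[g,x]\in\partial G_v$ where $v$ is the vertex of $T$ stabilized by the conjugate $gA_jg^{-1}$, and let $V_U(\xi)=U\cup\widetilde V_U$ be a basic neighbourhood of $\xi$, where $U$ is a neighbourhood of $\xi$ in $\overline{\Gamma G_v}$. Identifying $\overline{\Gamma G_v}$ with $\overline{\Gamma A_j}$ via the map induced by $z\mapsto[g,z]$ (which carries $x$ to $\xi$), the set $U$ pulls back to a neighbourhood $U'$ of $x$ in $\overline{\Gamma A_j}$. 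I claim that $f(U'\cap\partial A_j)\subset V_U(\xi)$: indeed, for $x'\in U'\cap\partial A_j$ we have $f(x')=[g,x']\in\partial G_v$, and under the above identification $f(x')$ corresponds to a point of $U\cap\partial A_j\subset U$, so $f(x')\in U\subset V_U(\xi)$. Since $U'\cap\partial A_j$ is open in $\partial A_j$, this proves continuity at $x$.

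The key steps, in order, are: (1) identify the target vertex $v$ and the canonical homeomorphism $\overline{\Gamma G_v}\cong\overline{\Gamma A_j}$ coming from the group element $g$; (2) observe that the restriction of $f$ to $\partial A_j$ is, through this identification, simply the inclusion $\partial A_j=\partial(\Gamma A_j)\hookrightarrow\overline{\Gamma A_j}$ followed by the identification $\partial(\Gamma A_j)\cong\partial G_v$; (3) pull back an arbitrary basic neighbourhood $V_U(\xi)$ of $f(x)$ to a neighbourhood of $x$, using that it contains $U$ and that $U$, being open in $\overline{\Gamma G_v}$, restricts to an open set of the subspace $\partial G_v$.

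There is essentially no obstacle here: the point is just that $\delta_{Stab}\Gamma$ was defined so that each $\partial G_v$ sits inside it as a \emph{topological} subspace (not merely set-theoretically), because the neighbourhood $V_U(\xi)$ always contains the full piece $U$ of $\overline{\Gamma G_v}$; the auxiliary part $\widetilde V_U$, accounting for points projecting away from $v$ in $T$, only enlarges the neighbourhood and is irrelevant when testing continuity of a map whose image lies entirely in the single fibre $\partial G_v$. The only mild subtlety worth a sentence in the write-up is to be careful that the identification $\overline{\Gamma G_v}\cong\overline{\Gamma A_j}$ used to read off the topology is exactly the one induced by left translation by $g$, so that it is compatible with the equivalence relation defining $\delta_{Stab}\Gamma$ and sends $x$ to $[g,x]$.
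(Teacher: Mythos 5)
Your argument is correct, and it is exactly the ``straightforward'' verification the paper has in mind when it omits the proof: since $f$ lands in the single fibre $\partial G_v$, only basic neighbourhoods $V_U(\xi)\cap\delta\Gamma$ of points $\xi\in\delta_{Stab}\Gamma$ need testing, and these contain the full set $U\subset\overline{\Gamma G_v}$, which pulls back under the identification $\overline{\Gamma G_v}\cong\overline{\Gamma A_j}$ induced by $z\mapsto[g,z]$ to an open neighbourhood of $x$; your remark that this identification is the one given by $g$ (well defined up to the action of an element of $A_j$, hence harmless) settles the only subtle point.
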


Next lemma requires some preparatory notation. Given an oriented edge $e$ of $T$, consider the two subtrees obtained by deleting from $T$ the interior of $e$, and denote by $T_+$ this subtree which contains the terminal vertex of $e$, and by $T_-$ the remaining subtree. Consequently, denote by $\partial T_+$ and $\partial T_-$ the corresponding sets of ends of the subtrees, which we naturally view as subsets of $\partial T$.

\begin{lem}
For any oriented edge $e$ of $T$ the subsets $W^e_+:=p^{-1}(T_+\cup\partial T_+)$ and $W^e_-:=p^{-1}(T_-\cup\partial T_-)$ form an open-closed partition of $\delta\Gamma$.\qed
\label{openclosed}
\end{lem}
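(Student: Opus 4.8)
\textbf{Proof plan for Lemma \ref{openclosed}.}

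The plan is to show directly that $W^e_+$ is open and $W^e_-$ is open; since they clearly partition $\delta\Gamma$ (every point of $\delta_{Stab}\Gamma$ lies over a vertex, hence in exactly one of $T_+$ or $T_-$, and every end of $T$ is an end of exactly one of the two subtrees — here we use that deleting the open edge $e$ splits $T$ into exactly two components), this immediately gives an open-closed partition. By symmetry it suffices to treat $W^e_+$, and I would check openness at each point $z\in W^e_+$ by producing a basic neighbourhood of $z$ contained in $W^e_+$, distinguishing the two types of points of $\delta\Gamma$.

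First, suppose $z=\xi\in\delta_{Stab}\Gamma$ with $p(\xi)=v$ a vertex of $T_+$. I would pick any basic neighbourhood $V_U(\xi)$ with $U$ small enough to miss the attaching point of the lift of the first edge of the geodesic from $v$ towards $e$ (when $v$ is an endpoint of $e$ this is the attaching point of $\widetilde e$ itself; when $v\ne$ the endpoints, take $U$ arbitrary). By the definition of $V_U(\xi)$, every $z'\in \widetilde V_U$ has $p(z')$ lying in a component of $T\setminus\{v\}$ reached by an edge whose lift attaches to $\Gamma G_v$ inside $U$; since $U$ avoids the direction of $e$, that component, and hence $p(z')$, stays in $T_+\cup\partial T_+$. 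Thus $V_U(\xi)\subset W^e_+$. Second, suppose $z=\eta\in\partial T$ with $\eta\in\partial T_+$. Then the geodesic ray $[v_0,\eta)$ eventually enters $T_+$ and stays there, so for $n$ large the vertex $u_n(\eta)$ lies strictly inside $T_+$ (i.e. $u_n(\eta)$ and all of $T_n(\eta)$ are in $T_+\cup\partial T_+$). For such $n$, $V_n(\eta)=p^{-1}(T_n(\eta)\setminus\{u_n(\eta)\})\subset p^{-1}(T_+\cup\partial T_+)=W^e_+$. This exhausts the points of $W^e_+$, so $W^e_+$ is open; the same argument with the orientation of $e$ reversed shows $W^e_-$ is open.

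The routine point I am glossing over is the precise bookkeeping of ``which edge of $T$ points towards $e$'' from a given vertex $v$, and the verification that the attaching point of the lift of that edge is a single well-defined point of $\Gamma G_v$ that a small enough $U$ can avoid — this follows from the tree-of-spaces structure and the fact that $\overline{\Gamma G_v}$ is a metric (hence regular Hausdorff) space, so points can be separated from finite sets. I do not expect a genuine obstacle here: the only mild subtlety is making sure the argument covers the degenerate case where $v$ is one of the two endpoints of $e$, where the relevant edge is $e$ itself and one must choose $U$ to miss the attaching point of $\widetilde e$ in $\Gamma G_v$. Since $\delta\Gamma$ is compact (Proposition \ref{boundarytopology}) and now exhibited as a disjoint union of two open sets, both $W^e_+$ and $W^e_-$ are automatically closed as well.
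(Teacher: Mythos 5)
The paper does not actually contain a proof of Lemma \ref{openclosed}: it is one of the two ``basic topological properties\dots whose straightforward proofs we omit''. Your argument is precisely the intended straightforward one — the two sets partition $\delta\Gamma$ because a geodesic or ray in a tree crosses $e$ at most once, and openness is checked on the basic neighbourhoods $V_U(\xi)$ and $V_n(\eta)$ — so there is no divergence of method to discuss, only the accuracy of your bookkeeping, which is where the two small problems lie.

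First, your parenthetical ``when $v\ne$ the endpoints, take $U$ arbitrary'' is false and would break the argument if followed literally. Membership of $z$ in $\widetilde{V}_U$ is governed solely by the \emph{first} edge of the geodesic $[v,p(z)]$; for every $z$ with $p(z)\in T_-\cup\partial T_-$ this first edge is the edge $e_v$ at $v$ pointing towards $e$ (an edge of $T_+$ when $v$ is not an endpoint of $e$), so if $U$ contains the attaching point of $\widetilde{e_v}$ then $V_U(\xi)$ contains points projecting into $T_-$. Thus in \emph{all} cases one must choose $U$ missing the attaching point of $\widetilde{e_v}$ (with $e_v=e$ when $v$ is the endpoint of $e$ in $T_+$); this is exactly what your main clause prescribes, and it is achievable since that attaching point lies in $\Gamma G_v$ while $\xi\in\partial G_v$ and $\overline{\Gamma G_v}$ is metrizable. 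Second, in the case $\eta\in\partial T_+$, the condition ``$u_n(\eta)\in T_+$'' alone does not suffice when $v_0\in T_+$: if $u_n(\eta)$ still lies on the segment $[v_0,a]$, where $a$ is the endpoint of $e$ in $T_+$, then $T_n(\eta)$ contains all of $T_-$, since $T_n(\eta)$ is just the branch $\{x: u_n(\eta)\in[v_0,x]\}$. You need $n$ large enough that $u_n(\eta)\in T_+$ and $u_n(\eta)\notin[v_0,a]$ (e.g.\ $n>d(v_0,a)$), which is presumably what ``strictly inside $T_+$'' was meant to say; with that reading, a geodesic from $v_0$ through $u_n(\eta)$ can never cross $e$, and the argument closes. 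Finally, compactness of $\delta\Gamma$ is not needed for closedness: each of the two sets is the complement of the other, which is open.
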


We will call the pairs of subsets of $\delta\Gamma$ as in the above lemma {\it the natural halfspaces in $\delta\Gamma$}.
The above two lemmas have the following useful consequences.

\begin{prop} A point of $\eta\in\partial T\subset\delta\Gamma$ is its own connected component in $\delta\Gamma$.
\label{boundarypoints}
\end{prop}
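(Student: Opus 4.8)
The plan is to show that any point $\eta\in\partial T$ is separated from every other point of $\delta\Gamma$ by a clopen set, hence its connected component is a singleton. First I would recall from Lemma~\ref{openclosed} that each oriented edge $e$ of $T$ produces a clopen partition $\delta\Gamma=W^e_+\sqcup W^e_-$ of $\delta\Gamma$, and that $\eta$ lies in exactly one of the two halves depending on which side of $e$ it falls on. The key geometric observation is that the family of these halfspaces, as $e$ ranges over all oriented edges on the geodesic ray $[v_0,\eta)$ oriented away from $v_0$, forms a neighbourhood basis of $\eta$ consisting of clopen sets whose intersection is exactly $\{\eta\}$; indeed the halfspace $W^e_+$ associated to the $n$-th such edge is contained in $V_n(\eta)$ together with $p^{-1}(\{u_n(\eta)\})$, and its intersection over all $n$ projects under $p$ to $\{\eta\}$.

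Concretely, the steps would be: (1) fix $\eta\in\partial T$ and an arbitrary $z\in\delta\Gamma$ with $z\neq\eta$; (2) let $x=p(z)\in{\cal V}(T)\cup\partial T$ be its projection, and note that since $z\neq\eta$ we have $x\neq\eta$ in $T\cup\partial T$; (3) choose an edge $e$ on the ray $[v_0,\eta)$, far enough out along the ray, so that $x$ and $\eta$ lie on opposite sides of $e$ in $T\cup\partial T$ — such an edge exists because $\eta$ is an end and $x$ is either a vertex or a distinct end, so the geodesic rays $[v_0,x]$ and $[v_0,\eta)$ eventually diverge; (4) orient $e$ away from $v_0$, so that $\eta\in\partial T_+$ hence $\eta\in W^e_+$, while $x\in T_-\cup\partial T_-$ hence $z\in p^{-1}(T_-\cup\partial T_-)=W^e_-$; (5) conclude by Lemma~\ref{openclosed} that $W^e_+$ is a clopen set containing $\eta$ but not $z$. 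Since $z$ was arbitrary, the intersection of all clopen neighbourhoods of $\eta$ is $\{\eta\}$, and a standard fact (the quasi-component equals the component in a compact Hausdorff space, or simply: a space where a point is separated by clopen sets from every other point has that point as its own component) gives that $\{\eta\}$ is a connected component.

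For the last invocation I would lean on the fact, already available since $\delta\Gamma$ is compact Hausdorff by Proposition~\ref{boundarytopology}, that the connected component of a point coincides with its quasi-component (the intersection of all clopen sets containing it); alternatively one argues directly that if $C$ is the component of $\eta$ and $C$ contained some $z\neq\eta$, then $W^e_+\cap C$ and $W^e_-\cap C$ would be a nontrivial clopen partition of $C$, contradicting connectedness. I would phrase the argument using the halfspaces directly to avoid even needing the quasi-component machinery.

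The main obstacle — really the only non-formal point — is step (3): verifying that for a point $x\in{\cal V}(T)\cup\partial T$ distinct from the end $\eta$ there is an edge on $[v_0,\eta)$ separating $x$ from $\eta$. This is a routine fact about the topology of $T\cup\partial T$: if $x$ is a vertex, then beyond the last vertex shared by $[v_0,x]$ and $[v_0,\eta)$ the ray $[v_0,\eta)$ leaves $x$ behind; if $x$ is an end, the two rays eventually branch. Once this is in hand, everything else is a direct application of Lemma~\ref{openclosed}.
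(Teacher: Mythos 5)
Your proof is correct and follows essentially the same route as the paper: separate $\eta$ from any other point of $\delta\Gamma$ by a pair of natural halfspaces, which are clopen by Lemma~\ref{openclosed}, and conclude that $\{\eta\}$ is a component. The paper leaves the separation step as a "straightforward observation," and your steps (2)--(4) simply spell out that observation; the direct disconnection argument in your final paragraph is exactly the intended conclusion.
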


\begin{proof}
A straigtforward observation shows that any point $\xi\in\delta\Gamma$ distinct from $\eta$ can be separated from $\eta$ by a pair of natural halfspaces. In view of Lemma \ref{openclosed}, this clearly implies the assertion.
\end{proof}

\begin{prop}
Let $v$ be a vertex of $T$ which is projected to $v_j$ by the natural projection $T\to Q$, and suppose that the group $A_j$ is 1-ended. Then the subset $\partial G_v=p^{-1}(v)$ is a connected component of $\delta\Gamma$.
\label{connectedcomponent2}
\end{prop}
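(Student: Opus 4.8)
The strategy is to show that $\partial G_v$ is both open and closed in $\delta\Gamma$, and connected. Openness and closedness of $\partial G_v$ should follow by combining the natural halfspace decompositions of Lemma \ref{openclosed} over all edges incident to $v$: for each oriented edge $e$ of $T$ with terminal vertex $v$, the halfspace $W^e_+$ is open-closed, and the intersection $\bigcap_e W^e_+$, taken over all edges pointing into $v$, is exactly $p^{-1}(v)=\partial G_v$. The subtlety is that $v$ may have infinite valence, so this is an intersection of infinitely many open-closed sets, which need not be open. I would instead argue as follows: $\partial G_v$ is closed, being such an intersection of closed sets; for openness, I would show that $\partial G_v$ is a connected component, and that each of its points has a neighbourhood basis inside $\overline{\Gamma G_v}$ (from the description of the topology on $\overline\Gamma$), and that $\overline{\Gamma G_v}=\Gamma G_v\cup\partial G_v$; since $\partial A_j$ is connected and the complement of $\partial G_v$ in $\delta\Gamma$ is a disjoint union of the open-closed pieces $\widetilde V$ appearing in the neighbourhood definition, one gets that $\partial G_v$ is relatively open in $\delta\Gamma\setminus(\text{those pieces})$, hence open.

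More cleanly: first I would prove connectedness of $\partial G_v$. By Lemma \ref{technicallemma2} (with the appropriate conjugate), the map $f_{g,j}:\partial A_j\to\delta\Gamma$ is continuous and has image exactly $\partial G_v$ for suitable $g$; since $\partial A_j$ is connected (as $A_j$ is 1-ended, its Gromov boundary is nonempty and connected), the image $\partial G_v$ is connected. So $\partial G_v$ lies inside a single connected component $C$ of $\delta\Gamma$.

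Next I would show $C\subseteq\partial G_v$, which gives $C=\partial G_v$ and hence that $\partial G_v$ is a connected component. Suppose $\zeta\in\delta\Gamma\setminus\partial G_v$. If $\zeta\in\partial T$, then by Proposition \ref{boundarypoints} $\{\zeta\}$ is its own component, so $\zeta\notin C$. If $\zeta=[h,y]\in\delta_{Stab}\Gamma$ lies in $\partial G_{v'}$ for some vertex $v'\ne v$, pick the first edge $e$ on the geodesic $[v,v']$ in $T$, oriented away from $v$; then Lemma \ref{openclosed} gives the open-closed partition $\delta\Gamma=W^e_+\sqcup W^e_-$ with $\partial G_v\subseteq W^e_-$ and $\zeta\in W^e_+$. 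Thus $\zeta$ is separated from $\partial G_v$ by an open-closed partition, so $\zeta\notin C$. Hence $C=\partial G_v$, which is therefore a connected component of $\delta\Gamma$, as claimed.

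\textbf{Main obstacle.} The only delicate point is making sure the image of $f_{g,j}$ is precisely $\partial G_v$ rather than merely contained in it — this is the identification $\partial G_v=p^{-1}(v)=\{[g,x]:x\in\partial A_j\}$ for $g$ representing $v$, which is immediate from the construction of $\delta_{Stab}\Gamma$ — and secondly the verification that every vertex $v'\ne v$ is genuinely separated from $\partial G_v$ by a single natural halfspace, using that the first edge of $[v,v']$ does the job. Both are routine given Lemmas \ref{technicallemma2} and \ref{openclosed} and Proposition \ref{boundarypoints}; no serious difficulty is expected.
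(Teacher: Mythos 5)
Your argument (the ``more cleanly'' version) is correct and is essentially the paper's proof: connectedness of $\partial G_v$ comes from continuity of $f_{g,j}$ in Lemma \ref{technicallemma2} together with connectedness of $\partial A_j$, and every point outside $\partial G_v$ is excluded from its component by a natural halfspace partition from Lemma \ref{openclosed} (your detour through Proposition \ref{boundarypoints} for points of $\partial T$ is fine but unnecessary, since the same halfspace separation handles them). The tentative open/closed considerations in your first paragraph are superfluous and can be dropped, as the component argument never needs $\partial G_v$ to be open.
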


\begin{proof}
Note that the subset $\partial G_v$ coincides then with the image of the map $f_{g,j}:\partial A_j\to\delta\Gamma$
from Lemma \ref{technicallemma2}, for some appropriately chosen $g$. Since $A_j$ is 1-ended, $\partial A_j$ is connected,
and since $f_{g,j}$ is continuous, its image $\partial G_v$ is also connected.
On the other hand, any point $\xi\in\delta\Gamma\setminus \partial G_v$ can be easily separated from $\partial G_v$ by a pair of natural halfspaces (which are open and closed due to Lemma \ref{openclosed}), which completes the proof. 
\end{proof}

\noindent
{\bf Proof of Theorem \ref{Thm2}.}
By Theorem \ref{PapasogluWhyte}, without changing the quasi-isometry type of $\pi_1({\cal G}_i)$ and the 1-ended vertex groups of ${\cal G}_i$, we can assume that ${\cal G}_i$ represent free products $G_i=A_{i,1}*\dots* A_{i,k_i}$ of 1-ended groups $A_{i,j}$. By Propositions \ref{boundarypoints} and \ref{connectedcomponent2} (accompanied with the above mentioned extension of Proposition \ref{boundarytopology}), the connected components of $\partial G_i$ are single points and subspaces homeomorphic to boundaries $\partial A_{i,j}$ for all $j\in\{ 1,\dots,k_i \}$. Since a homeomorphism $\partial G_1= \partial\pi_1({\cal G}_1)\to\partial\pi_1({\cal G}_2)=\partial G_2$ clearly maps components homeomorphically onto components, we get $h({\cal G}_1)=h({\cal G}_2)$, which concludes the proof.\qed

\bibliographystyle{alpha}
\bibliography{HomeoBoundaries}

\begin{thebibliography}{Woe00}

\bibitem[BH99]{BridsonHaefliger}
Martin~R. Bridson and Andr{\'e} Haefliger.
\newblock {\em Metric spaces of non-positive curvature}, volume 319 of {\em
  Grundlehren der Mathematischen Wissenschaften [Fundamental Principles of
  Mathematical Sciences]}.
\newblock Springer-Verlag, Berlin, 1999.

\bibitem[Bow98]{BowditchTopologicalCharacterization}
Brian~H. Bowditch.
\newblock A topological characterisation of hyperbolic groups.
\newblock {\em J. Amer. Math. Soc.}, 11(3):643--667, 1998.

\bibitem[Dah03]{DahmaniCombination}
Fran{\c{c}}ois Dahmani.
\newblock Combination of convergence groups.
\newblock {\em Geom. Topol.}, 7:933--963 (electronic), 2003.

\bibitem[Dun85]{DunwoodyAccessibility}
M.~J. Dunwoody.
\newblock The accessibility of finitely presented groups.
\newblock {\em Invent. Math.}, 81(3):449--457, 1985.

\bibitem[Mar13]{MartinPhD}
Alexandre Martin.
\newblock Geometry and topology of non-positively curved complexes of groups
  ({P}h{D} thesis), 2013.

\bibitem[PW02]{PapasogluWhyteQI}
Panos Papasoglu and Kevin Whyte.
\newblock Quasi-isometries between groups with infinitely many ends.
\newblock {\em Comment. Math. Helv.}, 77(1):133--144, 2002.

\bibitem[Ser77]{SerreTrees}
Jean-Pierre Serre.
\newblock {\em Arbres, amalgames, {${\rm SL}_{2}$}}.
\newblock Soci\'et\'e Math\'ematique de France, Paris, 1977.
\newblock Avec un sommaire anglais, R{\'e}dig{\'e} avec la collaboration de
  Hyman Bass, Ast{\'e}risque, No. 46.

\bibitem[SW79]{ScottWall}
Peter Scott and Terry Wall.
\newblock Topological methods in group theory.
\newblock In {\em Homological group theory ({P}roc. {S}ympos., {D}urham,
  1977)}, volume~36 of {\em London Math. Soc. Lecture Note Ser.}, pages
  137--203. Cambridge Univ. Press, Cambridge, 1979.

\bibitem[Tir11]{TirelProducts}
Carrie~J. Tirel.
\newblock Z-structures on products.
\newblock {\em Algebr. Geom. Topol.}, 11:2587--2625, 2011.

\bibitem[Woe86]{WoessMartinBoundary}
Wolfgang Woess.
\newblock A description of the {M}artin boundary for nearest neighbour random
  walks on free products.
\newblock In {\em Probability measures on groups, {VIII} ({O}berwolfach,
  1985)}, volume 1210 of {\em Lecture Notes in Math.}, pages 203--215.
  Springer, Berlin, 1986.

\bibitem[Woe00]{WoessRandomWalks}
Wolfgang Woess.
\newblock {\em Random walks on infinite graphs and groups}, volume 138 of {\em
  Cambridge Tracts in Mathematics}.
\newblock Cambridge University Press, Cambridge, 2000.

\end{thebibliography}

\noindent Alexandre Martin, IRMA, Université de Strasbourg, 7 rue René-Descartes, 67084 Strasbourg Cedex, France.

\noindent E-mail: martin@math.unistra.fr\\

\noindent Jacek Świątkowski, Instytut Matematyczny, Uniwersytet Wroc\l awski, pl. Grunwaldzki 2/4, 50-384 Wroc\l aw, Poland

\noindent E-mail: Jacek.Swiatkowski@math.uni.wroc.pl
\end{document}